\newcommand{\B}{\mathbb B}
\newcommand{\C} {\mathbb C}
\newcommand{\h} {\hat}
\newcommand{\mc}{\mathcal}
\newtheorem{theorem}{Theorem}
\newtheorem{lemma}[theorem]{Lemma}
\newtheorem{corollary}[theorem]{Corollary}
\newtheorem{proposition}[theorem]{Proposition}
\title{On Poincar\'e extensions of rational maps.}
\author{Carlos Cabrera, Peter Makienko and Guillermo Sienra.}
\begin{document}

\maketitle
\footnotetext{This work was partially supported by PAPIIT IN-105912 
and CONACYT CB2010/153850.}
\begin{abstract}

There is a classical extension, of M\"obius automorphisms of the Riemann sphere 
into isometries of the hyperbolic space $\mathbb{H}^3$, which is called the 
Poincar\'e extension. In this paper, we construct extensions of rational maps 
on the Riemann sphere over endomorphisms of $\mathbb{H}^3$ exploiting the 
fact that any holomorphic covering between Riemann surfaces is M\"obius for a 
suitable choice of coordinates. 
We show that these extensions define  conformally natural homomorphisms on 
suitable subsemigroups of the semigroup of Blaschke maps.  We extend the 
complex multiplication to a product in $\mathbb{H}^3$ that allows to construct 
a visual extension 
of any given rational map.
\end{abstract}

\section{Introduction}

In the literature there are some constructions of extensions of rational
dynamics from $\C$ to $\mathbb{H}^3$, see for example \cite{martinextension}, 
\cite{Mc2} and \cite{Carsten}. The constructions in \cite{Mc2} and 
\cite{Carsten} are based on Choquet's barycentric construction introduced and 
studied by A. Douady and C. Earle in their paper \cite{DouadEarl}. Other 
important contributions on the barycentric constructions appear in 
\cite{Abikofbarycen} and \cite{bessonCourGallot}.

As it was mention in the abstract, the basic idea of  this paper is the 
following fact: ``\textit{Any holomorphic covering between Riemann surfaces is 
a M\"obius map on suitable coordinates.}'' Then this covering can be extended 
to suitable M\"obius manifolds. Let us discuss this idea in details. 

First,  remind that a M\"obius $n$-orbifold is a $n$-orbifold endowed with an
atlas such that the transition maps are M\"obius transformations.

Given a discrete subgroup $\Gamma$ of M\"{o}bius transformations of the $n$-sphere $S^{n}$
acting properly discontinuous and freely on a domain $\Omega \subset S^{n}$, the
quotient manifold $\Omega / \Gamma$ admits a M\"{o}bius structure. In the case
when $n=3$, any manifold modeled on one of the following spaces 
$\mathbb{R}^{3}, S^{3}$, the unit ball $B^{3}$ in $\mathbb{R}^3, \, S^{2}\times 
{\mathbb 
{R}}$ or $B^{2}\times
{\mathbb{R}}$ admits a M\"{o}bius structure, see \cite{ScGeom}.

Let $S_1$ and $S_2$ be two M\"obius  2-orbifolds and let
$R:S_1\rightarrow S_2$ be a finite degree covering which is M\"obius on the
respective M\"obius structures. Assume that there exist two Kleinian groups
$\Gamma_1$ and $\Gamma_2$ and two components  $W_1$ and $W_2$ of the 
discontinuity
sets $\Omega(\Gamma_1)$ and $\Omega(\Gamma_2)$ respectively, such that
$$S_i=W_i/Stab_{W_i}(\Gamma_i)$$ for $i=1,2.$ Now assume that there exist a
 M\"obius map $\alpha(R):W_1\rightarrow W_2$ making the following diagram
commutative

\begin{displaymath}
    \xymatrix{W_1
         \ar[d]_{\pi_1} \ar[r]^{\alpha(R)} & 
\ar[d]^{\pi_2}W_2\\S_1
           \ar[r]^{R}  & S_2
}\tag{1}\label{diag.basic}
\end{displaymath}
so that $\alpha(R)$ induces a homomorphism from $\Gamma_1$ to
$\Gamma_2.$ If $$M_i=(\bar{\mathbb{H}}^3\cup W_i)/\Gamma_i,$$ then $\alpha(R)$
induces a unique M\"obius morphism $$\tilde{R}:M_1\rightarrow M_2$$ which is an
extension of $R:S_1\rightarrow S_2.$ We call the map $\tilde{R}:M_1\rightarrow M_2$ a
\textit{Poincar\'e extension} of $R$. The map $\tilde{R}$ depends on the uniformizing groups $\Gamma_1$ and
$\Gamma_2$. Hence, in general,  for a given covering map $R$ there are many
possibilities to construct a Poincar\'e extension. Note that
the degree $deg(\tilde{R})$ is equal to the index $[\Gamma_2:\alpha(R)\circ\Gamma_1\circ
\alpha(R)^{-1}]. $
Hence $deg(R)\leq deg(\tilde{R})$ with equality when
$$Stab_{W_i}(\Gamma_i)=\Gamma_i.$$

Given a Riemann surface $S$ with a fixed M\"obius structure, in 
\cite[sect.8]{KulkarniPinkal} R. Kulkarni and U. Pinkal constructed 
 a M\"obius 3-manifold $M$ such that  the surface $S$ is canonically contained 
in  the
boundary of $M$. If the structure of $S$ is uniformazible by a non trivial
Kleinian group then, this construction is given by the Classical Poincar\'e extension of
the uniformizing group and produces a complete hyperbolic manifold $M$. 
The construction is based on the following idea: Let $D$ be a round disk on
$S$ with respect to the M\"obius structure; that is, there exist a coordinate 
under which $D$ is a round disk in the plane. Using this coordinate, we attach 
a round half-ball in $\mathbb{H}^3$ to $D$. Then the 3-manifold  $M$ is the 
union of all the half open balls over all round disks in $S.$ 

On the Riemann sphere $\bar{\C}$ there is a unique complete M\"obius structure $\sigma_0$, this is
the standard M\"obius structure on $\bar{\C}$. The construction of Kulkarni 
and Pinkal is clearer when $S$ is a planar surface with the standard 
M\"obius structure. Let $R$ be a branched self-covering of $\bar{\C}$. If 
$deg(R)>1$, then $R$ is not a M\"obius 
covering with respect to the standard structure on any domain in
$\bar{\C}.$

When $S=\C^*$ with the standard M\"obius structure, Kulkarni-Pinkal
construction gives a canonical non-complete M\"obius 3-manifold which
is M\"obius equivalent to the $3$-dimensional ball with the vertical diameter
removed and endowed with the standard conformal structure on $B^3$. Now, 
consider a 
complete M\"obius structure on $\C^*$. In this case, Kulkarni-Pinkal 
construction gives a complete hyperbolic 3-manifold with the same underlying 
space as before. More generally, if $S=\bar{\C}\setminus F$, where $F$ is a closed set, 
then the Kulkarni-Pinkal's extension $M$ is homeomorphic
to $\mathbb{H}^3\setminus convhull(F)$, where $convhull(F)$ is the
hyperbolic convex hull  in $\mathbb{ H}^3$ of all points in $F$. The standard
M\"obius structure on $M$ is the extension of the standard M\"obius structure on
$S.$ The construction of Kulkarni and Pinkal motivates the idea of a model
manifold for the Poincar\'e extension of a rational map. 

We will restrict our attention to the case when $R$  is a rational map and
$S_1$ and $S_2$ are two Riemannian orbifolds with underlying spaces 
contained in $\bar{\C}$, and such that
$R:S_1\rightarrow S_2$ is a holomorphic  covering. Let $\sigma_2$ be
an uniformizable M\"obius structure on
$S_2$ and suppose that the pullback $\sigma_1:=R_*(\sigma_2)$ is also a 
uniformizable M\"obius
structure on $S_1$. If $\Gamma_1$ and $\Gamma_2$ are the uniformizing groups.
Let $\tilde{R}$ be a Poincar\'e extension of $R$, such that $\Omega(\Gamma_i)$
are connected. Let $\phi_i:\partial M_i\rightarrow S_i$ be some identification maps and 
assume that there are  homeomorphic extensions $\Phi_i:M_i\rightarrow
\bar{\mathbb{H}}^3$ for each $\phi_i$. Then the map $\Phi_2  \circ \tilde{R} \circ \Phi_1^{-1}$ is
called \textit{geometric extension} if and only if satisfies the following
conditions.
\begin{enumerate}

\item The sets $\Phi_i(M_i\cup \partial M_i)$  are of the form
$\bar{\mathbb{H}}^3\setminus
\{\bigcup \gamma_j\}$
where each $\gamma_j$ is either a quasi-geodesic or  a family of finitely many
quasi-geodesic rays with common starting point. There are  no more than 
countably many curves $\gamma_j$.  Here by quasi-geodesic we mean the image of 
a hyperbolic geodesic by a quasiconformal automorphism.

\item There exist a continuous extension,  on all $\mathbb{H}^3$, which
maps
complementary quasi-geodesics to complementary quasi-geodesics. 

\end{enumerate}
Hence, a geometric extension is an endomorphism
of $\mathbb{H}^3$ such that 
its restriction to $\Phi_1(M_1)$ is a Poincar\'e extension.

Let $Rat_d(\C)$ denote the set of rational maps $R$ of degree $d$.
Let $A\subset Rat_d(\C)$. Assume that there exist a map
$$Ext:A\rightarrow End(\bar{\mathbb{H}}^3)$$ such that
$Ext(R)$ is an extension of $R$ for every $R$ in $A$. Then for every
pair of maps $h,g$ in the M\"obius group $Mob$ we define
$$\widetilde{Ext}(g\circ R \circ h)=\hat{g}\circ Ext(R) \circ \hat{h}$$
where $\hat{g}$ and $\hat{h}$ are the classical Poincar\'e extensions
of the maps $g$ and $h$ in the hyperbolic space, respectively.

If $\widetilde{Ext}$ is a well defined map from the  M\"obius bi-orbit of $A$ to
$End(\bar{\mathbb{H}}^3)$, then we call $Ext$ a \textit{conformally natural
extension} of $A$.

In particular, when the bi-action of $PSL(2,\C)$ on $A$
has no fixed points on $A$, then any map $Ext:A\rightarrow 
End(\mathbb{H}^3)$ defines a map $\widetilde{Ext}$ on the M\"obius bi-orbit 
which is a conformal natural extension. If the action has fixed points then, in 
order to obtain a conformally
natural extension the map $Ext$ has to be consistent with the M\"obius action.
The situation is tricky, even in the case when $A$ consists of a single point
$R$.

Let $\hat{R}$ be a geometric extension of a rational map $R$, then any
rational map on the M\"obius bi-orbit of $R$ has a geometric extension.
Namely, if $h$ and $g$ are elements in $PSL(2,\C)$, then $Q=g\circ R \circ
h$ has a geometric extension with the same uniformizing groups $\Gamma_1$ and 
$\Gamma_2$, the projections $p_1=\pi_1\circ \hat{h}$ and $p_2= 
\pi_2\circ\hat{g}^{-1}$ and the associated manifolds are 
$N_1=\hat{h}^{-1}(M_1)$ and $N_2=\hat{g}(M_2).$
We define an extension of $Q$  by the formula $\hat{Q}=\hat{g}\circ \hat{R}
\circ \hat{h}.$ 

However, one should be careful in the situation when, for a given $R$, there are other elements
$g'$ and  $h'$ in $PSL(2,\C)$ such that $Q=g'\circ R \circ h'$. This situation happens when there 
are elements $h_1$ and $h_2$ in $PSL(2,\C)$  such that

\begin{displaymath} R\circ h_2=h_1\circ
R.\tag{*}\label{Mob.semiconj}\end{displaymath} 

If there are no such $h_1$ and $h_2$ then, any geometric extension of $R$ is
conformally natural.  However,  if such elements do exist  but the Poincar\'e 
extensions of $h_i$ are M\"obius automorphisms of $M_i$ with respect to their M\"obius structures,
then the extension $R\mapsto \hat{R}$ is  conformally natural.

In this article, we will investigate the existence of extensions of $R$,
defined in the hyperbolic space $\mathbb{H}^3$,  satisfying as many 
as possible of the following desirable conditions.

\begin{enumerate}
\item \textbf{Geometric.} As defined above.

 \item \textbf{Same degree.} The index $[\Gamma_2:\Gamma_1]$ is equal
to the degree of the map $R$.

\item \textbf{Dynamical.}   Let $\hat{R}$ be a Poincar\'e extension
such that $M_1=M_2$, then $\hat{R}$ is \textit{dynamical}. In particular, these
are extensions $Ext$ such that  $Ext(R^n)=Ext(R)^n$ for $n=1,2,....$

\item \textbf{Semigroup Homomorphisms.} A stronger version of  the previous
property is to find semigroups $\mathcal{S}$, of rational maps, for which there
is an extension $Ext$ defined in all $\mathcal{S}$ such that $Ext(R\circ
Q)=Ext(R)\circ Ext(Q)$.

\item \textbf{Equivariance under M\"obius actions.} We look for conformally
natural extensions of subsets $A$ of $Rat_d(\C).$
\end{enumerate}

Assume that $R$ has a geometric extension, and let $\Gamma_2$ be the group that
uniformizes the  M\"obius structure on $S_2$. Then the discontinuity set 
$\Omega(\Gamma_2)$  consists 
of the orbit of a unique component $C$. The stabilizer of $C$ uniformizes 
the surface $S_2$. In this case the group $\Gamma_2$ is either
totally degenerated, of Schottky type or of Web group type. When $\Gamma_2$ is 
a web group, the orbit of the component $C$ is infinite. Thus in general it is 
possible 
that the condition (2) may not be satisfied.

On the other hand, if the component $C$ uniformizing  the surface $S_2$ 
is invariant under  $\Gamma_2$, then condition (2) is satisfied. For this 
reason, we restrict our discussion to this case. A group having an invariant 
component is called a function group. By Maskit's theorem any function group 
can be represented as a Klein-Maskit combination of the following groups:

\begin{itemize}
 \item Totally degenerated groups.
\item Schottky type.

\end{itemize}
According to this list of groups we call a geometric extension 
\textit{totally degenerated}, or of \textit{Schottky type}, whenever the 
uniformizing
group has the corresponding property.
Totally degenerated groups appear as geometric limits of quasifuchsian groups.
In fact, totally degenerated groups belong to the boundary of a Bers' slice. 

This paper is organized as follows.

In Section 2 we will discuss Hurwitz spaces and quasifuchsian extensions.
We will construct an extension, the radial extension, that satisfies (2), (3)
and (4) in the previous list. We will give some conditions for which the radial
extension is geometric.

Section 3 is devoted to Schottky type extensions.

In Section 4, we discuss a visual extension of all rational maps which is
connected to a product structure defined on the hyperbolic space.

Finally, in Section 5, we discuss examples of extensions and surgeries of Maskit
type.

\par\medskip\noindent \textbf{Acknowledgment.}
The authors would like to thank M. Kapovich for useful comments and suggestions 
on an early draft of this paper.

\section{Fuchsian structures, degenerated and radial extensions}

Given a rational map $R$, let $CV(R)$ denote the critical values of $R$ and 
take $S_2=\bar{\mathbb{C}}\setminus CV(R)$ and $S_1=R^{-1}(S_2)$ then 
$R:S_1\rightarrow S_2$ is a covering.  We assume that
the set of critical values of $R$ contains at least three points, say $b_1, b_2$
and $b_3$, so that $S_2$ is a hyperbolic Riemann surface. In order to get normalized maps
we pick three points $a_1, a_2$ and $a_3$ in $S_1$ such that $R(a_i)=b_i$ for $i=1,2,3.$

We say that two branched coverings $R$ and $Q$,
of the Riemann sphere onto itself, are \textit{Hurwitz equivalent} if there are 
quasiconformal homeomorphisms $\phi$ and $\psi$,  making the following diagram
commutative
\begin{displaymath}
    \xymatrix{
        \hat{\C} \ar[d]_{R} \ar[r]^{\psi} &
\ar[d]^{Q} \hat{\C} \\
             \hat{\C}  \ar[r]^{\phi}  &   \hat{\C}} 
\end{displaymath}

Given a rational map $R$, the Hurwitz space $H(R)$ is 
the set of all rational maps $Q$ that are Hurwitz equivalent to $R$. The  
topology we are considering on $H(R)$ is the compact-open topology.

Let $f:S_2\rightarrow S_2'$  be a representative of a point in $T(S_2)$ with 
Beltrami coefficient $\mu$ and fixing the points $b_i$. Let $R_*(\mu)$ be the 
pull-back of $\mu$ under $R$, and  $h_{f}$ be the solution, defined on the Riemann sphere, of the 
Beltrami equation for the coefficient $R_*(\mu)$, and take $S'_1=h_f(S_1)$. 
Let us define the map $\tau:T(S_2)\rightarrow H(R)$ so that $\tau(f)$ is
the rational map making the following diagram commutative

\begin{displaymath}
    \xymatrix{
        S_ 1 \ar[d]_{R} \ar[r]^{h_{f}}  & S'_1
\ar[d]^{\tau(f)} \\
             S_2  \ar[r]^{f}   &  S'_2 }
\end{displaymath} 

The map $\tau$ is well defined and continuous, since the solution of the 
Beltrami equation depends analytically on $\mu$.  We call the space 
$H_\tau(R)=\tau(T(S_2))$, the 
\textit{reduced Hurwitz space} of $R$. The closure of the bi-orbit by the M\"obius group 
of $H_\tau(R)$ is the whole $H(R).$ The group $PSL(2,\C)$ also acts by conjugation on 
$H(R).$ The space of orbits by conjugation fibres over the reduced Hurwitz 
space $H_\tau(R)$.

Let $f$ be an element of the Mapping Class Group $MCG(S_2)$
such that $f(b_i)=b_i$. We say that $f$ is \textit{liftable} with respect to 
$R$ if there exist a map $g:S_1\rightarrow S_1$, such that $g(a_i)=a_i$ and 
makes the following diagram commutative:

\begin{displaymath}
    \xymatrix{S_1
         \ar[d]_{R} \ar[r]^{g} & S_1
\ar[d]^R \\
          S_2 \ar[r]^f  &  S_2  }
\end{displaymath}

In this case, we say that $g$ is the lifted map of $f$ with respect to $f$. 
Let $\mathcal{G}$  be the subgroup of the mapping class group $MCG(S_2)$ which
consists of all liftable elements with respect to $R$. We identify 
$H_\tau(R)$ with the space $T_2(S_2)/\mathcal{G}$ as the following theorem suggests.

\begin{theorem} The space $H_\tau(R)$ is continuously bijective to
$T(S_2)/\mathcal{G}.$
\end{theorem}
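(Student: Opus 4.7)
The plan is to check that $\tau$ is continuous and that it descends to an injective map on the quotient $T(S_2)/\mathcal{G}$. Continuity of $\tau$ follows from analytic dependence of Beltrami-equation solutions on their coefficient; surjectivity onto $H_\tau(R)$ is tautological from the definition $H_\tau(R):=\tau(T(S_2))$; and continuity of the induced map $\bar\tau$ on the quotient then comes for free from the universal property of the quotient topology. So the two substantive tasks are (i) well-definedness of $\bar\tau$, namely $\mathcal{G}$-invariance of $\tau$, and (ii) injectivity of $\bar\tau$.

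For (i), given $g\in\mathcal{G}$ with lift $\tilde g$ normalized by $\tilde g(a_i)=a_i$, the pivotal identity to establish is $h_f\circ\tilde g=h_{f\circ g}$. Since $\tau(f)$ is holomorphic and post-composition with a holomorphic map preserves Beltrami coefficients, one has $\mu_{h_f\circ\tilde g}=\mu_{\tau(f)\circ h_f\circ\tilde g}=\mu_{f\circ R\circ\tilde g}$; using $R\circ\tilde g=g\circ R$ together with the holomorphy of $R$, this equals $\mu_{f\circ g\circ R}=R^{*}\mu_{f\circ g}$, which is precisely $\mu_{h_{f\circ g}}$ by construction. Both maps therefore solve the same Beltrami equation on $\hat{\mathbb{C}}$ and both fix $a_1,a_2,a_3$, so uniqueness of normalized quasiconformal solutions forces the equality. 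Substituting back yields $\tau(f\circ g)\circ h_{f\circ g}=f\circ g\circ R=\tau(f)\circ h_f\circ\tilde g=\tau(f)\circ h_{f\circ g}$, whence $\tau(f\circ g)=\tau(f)$.

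For (ii), suppose $\tau(f_1)=\tau(f_2)=:Q$. From $Q\circ h_{f_i}=f_i\circ R$ one obtains $f_1\circ R\circ h_{f_1}^{-1}=f_2\circ R\circ h_{f_2}^{-1}$; setting $g:=f_2^{-1}\circ f_1$ and $\tilde g:=h_{f_2}^{-1}\circ h_{f_1}$, this rearranges to $g\circ R=R\circ\tilde g$, with $g(b_i)=b_i$ and $\tilde g(a_i)=a_i$ by the chosen normalizations. Hence $g$ represents an element of $\mathcal{G}$ with canonical lift $\tilde g$, and since $f_1=f_2\circ g$ we conclude $[f_1]=[f_2]$ in $T(S_2)/\mathcal{G}$.

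The main obstacle is the bookkeeping with base points: the whole argument pivots on the uniqueness step that forces $h_f\circ\tilde g=h_{f\circ g}$, which in turn requires the three-point normalization to kill residual M\"obius ambiguity and the liftability hypothesis to supply a canonical $\tilde g$ with the correct base-point behaviour. Without either ingredient, one would only obtain equality of Beltrami coefficients up to a M\"obius factor, and the descended map would fail to be well-defined on $T(S_2)/\mathcal{G}$; the same base-point discipline is what allows the coset manipulations in (ii) to close up cleanly.
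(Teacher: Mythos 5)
Your proof is correct and follows essentially the same route as the paper: both directions reduce to the identity $h_{f}\circ\tilde g=h_{f\circ g}$ for the well-definedness and to reading off $h_{f_2}^{-1}\circ h_{f_1}$ as a normalized lift of $f_2^{-1}\circ f_1$ for injectivity. Your Beltrami-coefficient computation simply spells out the step the paper compresses into the phrase ``by the normalization of $g$,'' so there is no substantive difference.
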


\begin{proof}

We will show that $\tau(\phi_1)=\tau(\phi_2)$ if and only if $\phi_2^{-1}\circ
\phi_1\in \mathcal{G}$. 

Let $f=\phi_2^{-1}\circ \phi_1$, and assume that $f$ belongs to $\mathcal{G}$ and let $g$ be the lifted map 
of $f$ with respect to $R$, so if $h_{\phi_1}$ and $h_{\phi_2}$ are the maps associated with $\tau(\phi_1)$
and $\tau(\phi_2)$ respectively, we have $h_{\phi_2}\circ g=h_{\phi_1}$ by the 
normalization of $g$.
Hence $$\phi_2\circ f \circ R=\phi_1\circ R=\tau(\phi_1)\circ h_{\phi_1},$$ on the 
other hand, $$\phi_2\circ f \circ R=\phi_2\circ R\circ g=\tau(\phi_2)\circ 
h_{\phi_2}\circ g$$ so we get
$$\tau(\phi_1)=\tau(\phi_2).$$ 

Reciprocally, if $\tau(\phi_1)=\tau(\phi_2)$,  then $f=\phi_2^{-1}\circ 
\phi_1$ fixes the points $b_i$. Since $h_{\phi_2}^{-1}\circ h_{\phi_1}$ is a lift of 
$f$ with respect to $R$, the map $f$ belongs to $\mathcal{G}.$

\end{proof}

The following theorem gives a description of compact subsets of $H(R)$. 
For a quasiconformal map $f$, let $K_f(z)$ be the distortion of $f$ at the 
point $z.$

\begin{theorem}
 If $\{f_i\}$ is a family of quasiconformal maps on the sphere $\bar{\C}$ 
fixing the points $b_1,b_2$ and $b_3$. Let
 $$A_n=\{z:  K_{f_i}(z)\geq n \textnormal{ for } i \textnormal { big
enough.}\}$$
Assume that $A_\infty=\overline{\bigcap A_n}$ is a compact subset of $S_2$. So 
we have:
\begin{itemize}

\item If $A_\infty$ does not separate the critical values of $R$, then the
family $\{[f_i]\}$ is bounded in $T(S_2)$.  

\item If there exist a domain $D_0$  contained in $S_2\setminus
A_\infty$ such that $D_0$ contains at least two of the points 
$b_i$ and $W_0=R^{-1}(D_0)$ is connected then the
respective classes $\{\tau([f_i])\}$ 
are bounded in $Rat_d(\C).$
\end{itemize}

\end{theorem}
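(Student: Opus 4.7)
My strategy is to exploit the definition of $A_\infty$ (the locus where the dilatations $K_{f_i}$ blow up) together with compactness theorems for quasiconformal maps of bounded dilatation and Mumford-type compactness in the finite-dimensional Teichm\"uller space $T(S_2)$. The first bullet is a rigidity statement: degeneration in $T(S_2)$ must be pushed into $A_\infty$. The second is a normal families argument lifted through the covering $R$.

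For the first assertion, I argue by contradiction. Suppose $\{[f_i]\}$ is unbounded in $T(S_2)$, which is finite-dimensional since $S_2$ is a finitely punctured sphere. By Mumford compactness, after passing to a subsequence there is an essential simple closed curve $\gamma\subset S_2$ separating points of $CV(R)$, and embedded annuli $A_i\subset S_2$ in the homotopy class of $\gamma$ with $\mathrm{mod}(f_i(A_i))\to\infty$. Gr\"otzsch's inequality gives
\[
\mathrm{mod}(f_i(A_i))\le K_{f_i}(A_i)\cdot\mathrm{mod}(A_i),
\]
and since moduli of embedded annuli in the finite-type surface $S_2$ are bounded above, the maximal dilatation $K_{f_i}(A_i)$ must tend to infinity; hence $A_i$ accumulates on $\bigcap_n A_n\subset A_\infty$. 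The hypothesis that $A_\infty$ does not separate $CV(R)$ places all critical values in one component $U$ of $\bar{\C}\setminus A_\infty$. I will then argue that $\gamma$ is homotopic in $S_2$ to an essential curve lying in a compact subset $K\subset U\setminus CV(R)$, on which the dilatations $K_{f_i}$ admit, along a further (diagonal) subsequence, a uniform bound. Applying Gr\"otzsch to this representative gives $\mathrm{mod}(f_i(A_i'))$ bounded, contradicting the degeneration.

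For the second assertion, choose the given $D_0\subset S_2\setminus A_\infty$ containing, say, $b_1$ and $b_2$, with $W_0=R^{-1}(D_0)$ connected. On the compact closures inside $D_0$ the dilatations $K_{f_i}$ admit, along a subsequence, a uniform bound $K$. Since $f_i(b_j)=b_j$ for $j=1,2$ and $b_1,b_2\in D_0$, the restrictions $f_i|_{D_0}$ form a normal family of $K$-qc maps converging locally uniformly to a nondegenerate $K$-qc map $f_\infty$. Because $R$ is holomorphic, the Beltrami coefficient $R_*\mu_i$ has on $W_0$ the same pointwise dilatation as $\mu_i$ on $D_0$, and $h_{f_i}$ is normalized by $h_{f_i}(a_j)=a_j$ for $j=1,2$ (valid since $b_j\in D_0$ forces $a_j\in W_0$). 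Combined with the connectedness of $W_0$, this makes $h_{f_i}|_{W_0}$ a normal family of $K$-qc maps converging locally uniformly to a $K$-qc $h_\infty$. On the open set $h_\infty(W_0)$, the defining identity $\tau(f_i)\circ h_{f_i}=f_i\circ R$ yields $\tau(f_i)\to f_\infty\circ R\circ h_\infty^{-1}$ locally uniformly. Each $\tau(f_i)$ is a rational map of degree $d$; convergence on an open set to a nonconstant holomorphic limit with the same local covering degree $d$ implies that the coefficient vectors of $\tau(f_i)$ remain in the Zariski open locus $Rat_d(\C)$, i.e., there is no degree drop and the family is bounded.

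The principal obstacle is in the first part: extracting, from the pointwise definition of $A_\infty$, a subsequence with uniform dilatation bounds on a compact $K\subset \bar{\C}\setminus A_\infty$, and simultaneously representing the Mumford-degenerating class $\gamma$ by a curve inside such a $K$. The first issue calls for a diagonal extraction (or weak-$\ast$ compactness of the Beltrami coefficients $\mu_i$), and the second requires combining it with the topological input that $A_\infty$ does not separate $CV(R)$, so that every essential separating class in $S_2$ has a representative in the single component $U$.
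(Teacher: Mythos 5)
Your treatment of the second item is essentially the paper's argument: restrict to $D_0$ and to the connected preimage $W_0$, use the normalization at the two points $b_j\in D_0$ (and at the corresponding $a_j\in W_0$) to extract nonconstant quasiconformal limits, and pass the identity $f_i\circ R=\tau(f_i)\circ h_{f_i}$ to the limit to see that every accumulation point of $\{\tau(f_i)\}$ is a rational map of full degree $d$ on an open set, hence no degree drop occurs in the projective compactification of $Rat_d(\C)$. That part is fine and matches the paper.

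The first item is where you take a genuinely different route, and your contradiction does not close; there are two concrete failures. First, from $\mathrm{mod}(f_i(A_i))\to\infty$ and Gr\"otzsch you only get points $z_i\in A_i$ with $K_{f_i}(z_i)\to\infty$; but membership in $A_n$ requires $K_{f_j}(z)\ge n$ at a \emph{fixed} $z$ for all large $j$, so a sequence of points at which the dilatation is large for one index each need not come anywhere near $\bigcap_n A_n$. The claim that the annuli $A_i$ accumulate on $A_\infty$ is therefore a non sequitur. Second, and more seriously, even if you produce a homotopic representative annulus $A_i'$ inside a compact set on which the dilatations are uniformly bounded, Gr\"otzsch bounds $\mathrm{mod}(f_i(A_i'))$ for that \emph{one} annulus only, whereas the degeneration you must contradict is that the supremum of moduli over \emph{all} embedded annuli in the class of $f_i(\gamma)$ tends to infinity (equivalently, the extremal length of the class tends to $0$). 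Bounding the modulus of one image annulus in the class contradicts nothing; to control the extremal length on $f_i(S_2)$ you need control of $f_i$ on all of $S_2$, not just off $A_\infty$. The paper sidesteps both problems by surgery rather than by contradiction: it chooses a neighborhood $U$ of $A_\infty$, compactly contained in $S_2$, non-separating, with simply connected components and analytic boundary; notes that the restrictions $f_i|_{\partial U}$ are uniformly quasisymmetric; and replaces $f_i$ on $U$ by the Douady--Earle extension of these boundary values. The resulting maps $\tilde f_i$ agree with $f_i$ off $U$, are homotopic to $f_i$ (same boundary values), hence represent the same points of $T(S_2)$, and have uniformly bounded Beltrami coefficients, which gives boundedness directly. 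If you wish to keep your extremal-length route, you would have to establish a uniform positive lower bound for $\mathrm{Ext}_{f_i(S_2)}(f_i\gamma)$ over all essential $\gamma$, which in practice forces exactly this kind of global modification of $f_i$.
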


\begin{proof}
For the first item, let $U$ be a neighborhood of $A_\infty$ such that is 
compactly contained in $S_2$, does not separate $S_2$, and such that every 
component of $U$ is simply connected with analytic boundary.  

The restrictions of $f_i$ on $\partial U$ are quasisymmetric maps with uniform 
bound of distortion. Using Douady-Earle extension, the maps $f_i|_{\partial U}$ extend
to maps  $\tilde{f}_i$, defined on the interior of $U$,  with 
uniformly bounded distortion. Since $f_i$ and $\tilde{f}_i$ have the same 
values on the boundary, the maps $f_i$ and $\tilde{f}_i$ are homotopic, and 
define the same points in $T(S_2)$. Hence the family $f_i$ have uniformly 
bounded Beltrami coefficients, so defines a bounded set in $T(S_2).$

Since $Rat_d(\C)$ can be identified with an open and dense subset of the 
projective space,  it is enough to prove that all the limit maps of 
$\{\tau(f_i)\}$ are rational maps of 
the same degree. Let $g_i$ be quasiconformal automorphisms of $\bar{\C}$ fixing 
the points $a_i$ and such that $f_i\circ R=\tau(f_i)\circ g_i.$ 
Under the conditions of the second item, the accumulation functions of the families 
$\{f_i|_D\}$ and 
$\{g_i|_{W_0}\}$ are  non constant quasiconformal
functions. Let $R_\infty$ be a rational map which is accumulation point of the maps $\tau(f_i)$,
then there exist two non constant quasiconformal functions $f_\infty$ and
$g_\infty$ and domains $O = h_\infty(D)$ and $X = g^{-1}_\infty(W_0)$ on which 
$f_\infty\circ 
R=R_\infty\circ g_\infty$. Then
$deg(R_{\infty|O}) = deg(R_{|W_0})=deg(R)$,  as we wanted to prove.
       
\end{proof}

%
%
%

\subsection{Bers slices}

Let ${\Delta}=\{z:|z|<1\}$ and ${\Delta}^*=\{z:|z|>1\}.$ We denote the boundary 
of $\Delta$ by $\mathbb{S}^{1}.$  Now let us consider again the rational map $R$ and surfaces $S_i$. Let  
$\Gamma_i$  be Fuchsian groups that uniformizes the surfaces $S_i$. By the 
Monodromy Theorem there exist a M\"obius map
$\alpha$ such that $\tilde{\Gamma}=\alpha\Gamma_1\alpha^{-1}$ is a 
subgroup of $\Gamma_2$. If $p:\Delta \rightarrow S_1\cong \Delta/\tilde{\Gamma}$
is the orbit projection. Then defining $\pi_1=p\circ \alpha^{-1}$ gives the
following diagram

\begin{displaymath}
    \xymatrix{
        \Delta \ar[d]_{\pi_1} \ar[r]^{Id} &
\ar[d]^{\pi_2} \Delta \\
             S_1\cong \Delta/\Gamma_1  \ar[r]^{R}  &  
S_2\cong \Delta/\Gamma_2.}
\end{displaymath}

Moreover, $[\Gamma_2:\Gamma_1]=deg(R)$ since $R$ is a covering.  We call the 
pair $(\Gamma_1,\Gamma_2)$ a uniformization of $R.$ Simultaneously we have
uniformization of the surfaces $S^*_i=\Delta^*/\Gamma_i$ and the map
$Q:S_1^*\rightarrow S_2^*$ given by $Q(z):=\overline{R(\bar{z})}$. Now
these groups are acting on the complement of the unit disk $\Delta^*$.

Let  $D(\Gamma_2)$ be the space of groups $\Gamma$ such that there 
exist a
quasiconformal map $f$ such that $\Gamma=f\circ \Gamma_2 \circ f^{-1}$ 
and such that the Beltrami differential $\mu_f=0$ in $\Delta^*$. Now put
$$Def(\Gamma_2)=D(\Gamma_2)/PSL(2,\C).$$
  Analogously, we
define $Def^*(\Gamma_2)$ as the space of deformations of $\Gamma$ on
$\Delta^*.$ By Bers Theorem, both spaces $Def(\Gamma_2)$ and 
$Def^*(\Gamma_2)$ have compact closure on the space of classes of 
faithful and discrete representations 
$$\Gamma_2\hookrightarrow PSL(2,\C).$$ These closures of $Def(\Gamma_2)$ and
$Def^*(\Gamma_2)$  are called Bers slices of the Teichm\"uller space. In these
cases, the Bers slices consist of function groups with a simply connected 
invariant component. Geometrically finite groups contained in the Bers slice 
are either quasifuchsian or cusps. Definitions and properties can be found in 
the papers by  Bers \cite{bers1970boundaries}, by Maskit 
\cite{Maskitbounda} and by McMullen \cite{McMullenCusps}. 
If a group $G$ in the boundary of the Bers slice has a connected
region of discontinuity, then $G$ is totally degenerated. By theorems of Bers,
Maskit and McMullen (see \cite{bers1970boundaries}, \cite{Maskitbounda} and 
\cite{McMullenCusps}), totally degenerated groups and cusps are both dense on 
the boundary of the Bers slice. 

Any group $G$ in the Bers slice defines a 3-hyperbolic manifold $M(G)$
with boundary. Given a uniformization $(\Gamma_1, \Gamma_2)$ of $R$, let us 
consider  $M_1:=M(\Gamma_1)$ 
and $M_2:=M(\Gamma_2)$ the associated 3-hyperbolic manifolds with boundary. 
Then the inclusion $\Gamma_1$ in $\Gamma_2$ 
defines a M\"obius map $$F:M_1\rightarrow M_2.$$ The 
restriction of $F$ to the boundary components of $M_1$ define maps 
which are rational in coordinates. Let $\Sigma_i \subset \partial M_i$ be the 
respective invariant components of $\Gamma_i$. Then the map 
$F:\Sigma_1\rightarrow \Sigma_2$ belongs to either the Hurwitz space $H(R)$ or 
$H(Q)$.  

If a group $G$ in the Bers slice $Def(\Gamma_2)$ is geometrically finite, then 
$G$ is a cusp or quasifuchsian. In those situations there is a boundary 
component $S$ in $M(G)$ conformally equivalent to $\Delta^*/\Gamma_2$.  In the 
case that there are cusps on the group, we regard the set of all 
components as a connected surface with nodes $\tilde{S}$.

\textbf{Question.} Assume that $G_i$ converges, in Bers slice, to a totally
degenerated group $G$. Is it true that the accumulation set of the associated
rational maps may contain constants maps?

Now we are ready to proof the main result of this section. Let us begin with 
the following definition. Let $G$ be a totally degenerated group. Then we call  
the group $G$ \textit{acceptable} for the rational map $R$ if and only if  the 
following conditions hold:

\begin{itemize}
\item There are two uniformizable M\"obius orbifolds $S_i$ supported on the 
Riemann sphere, such that 
$R:S_1\rightarrow S_2$ is a holomorphic covering.
 \item If $\Gamma_2$ is Fuchsian group uniformizing $S_2$, then $G$ belongs 
to $Def^*(\Gamma_2)$.

\item The manifold $M(G)$ is homeomorphic to $\partial M(G)\times 
\mathbb{R}_+$, here $\mathbb{R}_+$ denotes the set of non-negative real numbers.

\end{itemize}
Let $\pi:\mathbb{H}^3\cup \Omega(G)\rightarrow M(G)$ be the orbit 
projection.  Under the homeomorphism of the last item, let us define, for every 
$t$ in $\mathbb{R}_+$ the set $\Omega(G)_t=\pi^{-1}(\partial M\times {t})$. 
Hence, the space $\mathbb{H}^3\cup \Omega(G)$ is foliated by the sets 
$\Omega(G)_t$ and there exists a continuous family of homeomorphisms 
$f_t:\Omega(G)\rightarrow \Omega(G)_t$  which commutes with $G$ and $f_0=Id.$

Let $\mathbb{B}^3$ denote the unit ball model for the hyperbolic space. Given a 
rational map $R$, we define the radial extension $\tilde{R}$ as follows. For
every $\lambda \in [0,1]$ and $(x,y,z)\in \mathbb{R}^3$, let
$H_\lambda(x,y,z)=(\lambda x, \lambda y, \lambda z)$. Then we have
$$\bar{\mathbb{B}}^3=\bigcup_{\lambda \in [0,1]}H_\lambda (\partial
\mathbb{B}^3).$$ Now
define $\hat{R}(0,0,0)=(0,0,0)$ and for $v\in\bar{\mathbb{B}}^3$, different from
$0$, define $\hat{R}(v)=H_{\|v\|}\circ R \circ H^{-1}_{\|v\|}$ where $\|v\|$
denotes the euclidean distance to $v$ from the origin.

\begin{theorem}
If there exist an acceptable group for $R$ then the radial extension of $R$ is
geometric.
\end{theorem}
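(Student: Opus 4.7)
The plan is to use the product decomposition provided by acceptability to realize both $M(G_1)$ and $M(G_2)$ as concrete subsets of $\bar{\mathbb{B}}^3$ foliated by round concentric spheres, and to show that the M\"obius Poincar\'e extension induced by the inclusion $G_1\subset G_2$ is conjugated under this realization precisely to the radial extension $\hat R$. First I produce a matching group on the source side: since $\Gamma_1\subset \Gamma_2$ is the subgroup associated to the covering $R$ and the deformation $\Gamma_2\leadsto G_2=G$ lies in $Def^*(\Gamma_2)$, the same quasiconformal conjugation restricts to deform $\Gamma_1$ into a subgroup $G_1\subset G_2$ of index $\deg(R)$. Because $G_1\subset G_2$ have the same limit set, $\Omega(G_1)=\Omega(G_2)$, and the finite covering $M(G_1)\to M(G_2)$ pulls back the decomposition $\partial M(G_2)\times \mathbb{R}_+$ to a product structure $M(G_1)\cong \partial M(G_1)\times \mathbb{R}_+$; the $G_2$-equivariant family $f_t$ furnished by acceptability is automatically $G_1$-equivariant and gives the corresponding foliation of $\mathbb{H}^3\cup \Omega(G_1)$.

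Next I build the identifications $\Phi_i$. Since $S_i\subset \bar{\mathbb{C}}$ is a sphere minus a finite set $F_i$ of punctures (with $F_2=CV(R)$ and $F_1=R^{-1}(F_2)$), the M\"obius identification $\partial M(G_i)\cong S_i$ provides a homeomorphism $\iota_i:\partial M(G_i)\to \partial \mathbb{B}^3\setminus F_i$. Fix a monotone homeomorphism $\lambda:[0,\infty]\to[0,1]$ with $\lambda(0)=1$ and $\lambda(\infty)=0$, and set $\Phi_i(x,t):=\lambda(t)\,\iota_i(x)$. Its image is exactly $\bar{\mathbb{B}}^3$ minus the finite family $\gamma_i=\{r\,\iota_i(p):r\in[0,1],\,p\in F_i\}$ of honest hyperbolic radii sharing the origin as a common starting point, which verifies condition~(1). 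The identity $\Phi_2\circ \tilde R=\hat R\circ \Phi_1$ then follows because $\tilde R$ preserves the foliations leaf-by-leaf and restricts on each leaf to $R$, while under $\Phi_i$ the leaf $\partial M(G_i)\times\{t\}$ becomes the sphere of radius $\lambda(t)$ on which $\hat R$ is by definition $H_{\lambda(t)}\circ R\circ H_{\lambda(t)}^{-1}$. Condition~(2) is immediate: $\hat R$ is already globally defined on $\bar{\mathbb{B}}^3$, continuous at the origin since $\|\hat R(v)\|=\|v\|\cdot \|R(v/\|v\|)\|$, and the identity $\hat R(rp)=rR(p)$ together with $R(F_1)=F_2$ sends $\gamma_1$ into $\gamma_2$.

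The main obstacle I expect is the leaf-by-leaf compatibility of the abstract foliation $\{\Omega(G_i)_t\}$ with a straight radial foliation in the ball: one must reparametrize $\lambda$ and choose $\iota_i$ within its isotopy class so that the foliation given by acceptability is actually realized, up to a homeomorphism of $M(G_i)$, by the concentric spherical shells of $\bar{\mathbb{B}}^3$, with the punctures of $S_i$ tracking along true radii. The totally degenerate end of $M(G_i)$ collapses asymptotically onto the (complicated) limit set of $G_i$, and without the product hypothesis $M(G)\cong \partial M(G)\times \mathbb{R}_+$ the rays that cap off the punctures could wander or accumulate, so the quasi-geodesic structure required by conditions~(1) and~(2) would not be available.
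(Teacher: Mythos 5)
Your proposal is essentially the paper's own argument: both of you pull the product structure of $M(G_2)$ back to $M(G_1)$ through the finite covering, send the leaf at level $t$ to the concentric sphere of radius $\lambda(t)$ (the paper's $k(t)$) via $H_{\lambda(t)}$ composed with the flow to the boundary leaf, and conclude that the Poincar\'e extension induced by $G_1\subset G_2$ is conjugate to the radial extension, with the complement being geodesic radii over the punctures. The ``obstacle'' you flag at the end is resolved in the paper exactly the way you anticipate --- the identification is built from the leaf homeomorphisms $\psi_t,\chi_t$ supplied by the product structure --- so there is no additional gap beyond what the acceptability hypothesis already provides.
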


\begin{proof}
 Let $G_2$ be an acceptable group for $R$, then there exist 
$\phi:\Omega(G_2)\rightarrow \Delta$ such that induces an isomorphism 
$\phi_*:G_2\rightarrow \Gamma_2$, then $G_2$ has a finite index subgroup 
$G_1=\phi_*^{-1}(\Gamma_1)$, such that map 
$\alpha(R):M(G_1)\rightarrow M(G_2)$ is M\"obius. Moreover, the manifold 
$M(G_1)$ is a manifold homeomorphic to $\partial M(G_1)\times \mathbb{R}_+$. We 
have to 
show that the radial extension is equivalent to $\alpha(R)$, so it is 
geometric. Again, each $M(G_i)$ is homeomorphic to $S_i\times 
\mathbb{R}_+$, and the horizontal foliation in $M_1$ is the pull back by 
$F:M_1\rightarrow M_2$ of the horizontal foliation in $M_2$. Hence there exist a 
covering 
$\phi:S_1\times \mathbb{R}_+\rightarrow S_2\times \mathbb{R}_+$ and
homeomorphisms $h_1, h_2$  such that the following
diagram commutes

\begin{displaymath}
    \xymatrix{
        M_1 \ar[d]_{h_1} \ar[r]^{F} &
\ar[d]^{h_2} M_2 \\
             S_1\times \mathbb{R}_+  \ar[r]^{\phi}  &  
S_2\times \mathbb{R}_+}
\end{displaymath}
such that $h_1(x)=x$ and $h_2(y)=y$ for all $x$ in $S_1$ and $y$ in $S_2$.
Hence $\phi(x)= F(x)=R(x)$ for $x$ in $S_1$. There
are two families of homeomorphisms $\psi_t$ and $\chi_t$ such that 

\begin{displaymath}
    \xymatrix{
        S_1\times\{t\} \ar[r]^{\phi} &
 S_2\times \{t\}\\
             S_1\times \{0\} \ar[u]_{\psi_t}  \ar[r]^{\phi}  & \ar[u]^{\chi_t}
S_2\times\{0\}}
\end{displaymath}
So that $\phi$ preserves the parameter $t$. Now consider a homeomorphism 
$k:[0,\infty]\rightarrow [0,1]$ such that $k(0)=1$ and $k(\infty)=0$. For 
$i=0,1$, let us identify the sets $S_i\times 0$ with the corresponding $S_i$ 
on the Riemann sphere. Hence  we define two homeomorphisms
$\psi$ and $\chi$ such that $\psi(x,t)=H_{k(t)}(\psi^{-1}_t(x))$ and
$\chi(x,t)=H_{k(t)}(\chi^{-1}_t(x))$, where $H_t(x,y,z)=(tx,ty,tz)$. Since 
$\psi$ and 
$\chi$ are the identity on the boundary, these homeomorphisms uniformize the 
extension of $\phi$ over $S_1$ and $S_2.$ 

\end{proof}

We have not found a reference that shows that the manifold of a totally 
degenerated group in the Bers slice of a finitely 
generated group is always a product. However Michael Kapovich kindly gave us
arguments to show this happens. The arguments are based upon a work of 
Waldhausen and the solution of the Tame Conjecture.

\section{Schottky type extensions of rational maps}

In this section, we prove that any map in the Hurwitz space of a Blaschke map
has an extension of Schottky type that satisfies the properties (1), (2) and 
(3) in the introduction. We also construct an extension of Blaschke 
maps that satisfy almost all five conditions: this extension does not 
satisfies condition (4). However, if we take the group of M\"obius 
transformations preserving the unit circle instead of $PSL(2,\C)$, then the 
modified condition (4) holds.

A \textit{Blaschke map} $B:\hat{\C} \rightarrow \hat{\C}$ is a rational map that
leaves the unit disk $\Delta$ invariant.  If $d$ is the degree of $B$, then 
there exist $\theta \in [0,2\pi]$ and $d$ points $\{a_1,...,a_d\}$ in $\Delta$ 
such that $$B(z)=e^{i\theta} 
\left(\frac{z-a_1}{1-\bar{a}_1z}\right)...\left(\frac{z-a_d}{
1-\bar{a}_d z}\right).$$ 
Let us denote by $B_{1}=B|_\Delta$ and $B_{2}=B|_{\Delta^*}$, then  
$B_{2}(z)=\frac{1}{\bar{z}} \circ B_{1}(z) \circ \frac{1}{\bar{z}}$.

Let $CV(B)$ be the set of critical values of $B$ then define 
$S_2=\bar{\C}\setminus CV(B)$ and $S_1=B^{-1}(S_2)$. Thus $B:S_1\rightarrow 
S_2$ is a holomorphic covering and the surfaces $S_i$ are symmetric with 
respect to $\mathbb{S}^1.$

The class of Blaschke maps allow us to build a specific topological construction
based upon Schottky coverings of Riemann surfaces. What is special about 
Blaschke maps is that every Blaschke map commutes with the involution 
$\tau(z)=\frac{1}{\bar{z}}$. Recall that Fuchsian groups of second type are 
Schottky type Fuchsian groups.

\begin{theorem}\label{thm.Blasc.Scho} Given a Blaschke map 
$B$, such that $B:S_1\rightarrow S_2$ is a covering and the surfaces $S_i$ are 
symmetric surfaces with respect to $\mathbb{S}^1$. There are two
Fuchsian groups of second type $\Gamma_1$ and $\Gamma_2$ such that 
$\Omega(\Gamma_i)/\Gamma_i=S_i$, where $\Omega(\Gamma_i)$ is the 
discontinuity set of $\Gamma_i$ for $i=1,2$. Furthermore, there exist an 
M\"obius map $\alpha:\Omega(\Gamma_1)\rightarrow \Omega(\Gamma_2)$ making
following diagram commutative

\begin{displaymath}
    \xymatrix{\Omega(\Gamma_1)
         \ar[d]_{\pi_1} \ar[r]^{\alpha} & 
\ar[d]^{\pi_2}  \Omega(\Gamma_2) \\ S_1
           \ar[r]^B  & S_2.  }
\end{displaymath}

Also $[\Gamma_2:\alpha \Gamma_1 \alpha^{-1}]=deg (B).$

\end{theorem}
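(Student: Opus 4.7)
The plan is to realize $\Gamma_1$ and $\Gamma_2$ as Fuchsian groups of second type uniformizing the interior hyperbolic pieces of $S_i$, using the Schottky double construction, and then obtain $\alpha$ by lifting $B$ to the common universal cover $\Delta$.

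\textbf{Schottky doubles.} Since every Blaschke map commutes with $\tau(z)=1/\bar{z}$, the set $CV(B)$ is $\tau$-invariant, hence so are $S_1$ and $S_2$. I would put $T_i:=S_i\cap\bar{\Delta}$, a bordered planar Riemann surface whose interior $T_i^{\circ}=S_i\cap\Delta$ is hyperbolic and whose boundary sits on $\mathbb{S}^1\setminus CV(B)$. By $\tau$-symmetry, $S_i$ is precisely the Schottky double of $T_i$ under $\tau$. Uniformize $T_i^{\circ}$ as $\Delta/\Gamma_i$ by a Fuchsian group $\Gamma_i$ of second type; its limit set $\Lambda(\Gamma_i)\subset\mathbb{S}^1$ is nowhere dense, and $\Omega(\Gamma_i)=\Delta\cup\Delta^{*}\cup(\mathbb{S}^1\setminus\Lambda(\Gamma_i))$. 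A standard Schottky-double argument, using Schwarz reflection to extend the universal cover $p_i:\Delta\to T_i^{\circ}$ across $\mathbb{S}^1$, gives $\Omega(\Gamma_i)/\Gamma_i\cong S_i$.

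\textbf{Lifting and verification.} Because $B$ preserves $\Delta$ and $CV(B)$, the restriction $B:T_1^{\circ}\to T_2^{\circ}$ is a holomorphic covering of degree $d=\deg(B)$. As $\Delta$ is simply connected, $B\circ p_1$ lifts through $p_2$ to a holomorphic $\alpha:\Delta\to\Delta$ with $p_2\circ\alpha=B\circ p_1$. Being a holomorphic covering of $\Delta$ onto itself, $\alpha$ is a M\"obius automorphism of $\Delta$ and extends canonically to a M\"obius map of $\hat{\C}$ preserving $\mathbb{S}^1$; in particular $\alpha\tau=\tau\alpha$. Since $B\tau=\tau B$, the identity $p_2\circ\alpha=B\circ p_1$ propagates from $\Delta$ to $\Delta^{*}$, and by continuity to $\mathbb{S}^1\cap\Omega(\Gamma_1)$; thus the diagram commutes on all of $\Omega(\Gamma_1)$. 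For each $\gamma\in\Gamma_1$, $p_2\circ\alpha\circ\gamma=B\circ p_1\circ\gamma=B\circ p_1=p_2\circ\alpha$, so $\alpha\circ\gamma$ differs from $\alpha$ by a deck transformation of $p_2$, yielding $\alpha\Gamma_1\alpha^{-1}\subset\Gamma_2$. The index $[\Gamma_2:\alpha\Gamma_1\alpha^{-1}]$ equals the degree of the covering $T_1^{\circ}\to T_2^{\circ}$, which is $d$.

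\textbf{Main obstacle.} The conceptual crux is the Schottky-double identification $\Omega(\Gamma_i)/\Gamma_i\cong S_i$: one needs the anti-holomorphic involution on the abstract double, induced by $z\mapsto 1/\bar{z}$ on $\Omega(\Gamma_i)$, to coincide with $\tau$ under this isomorphism. This compatibility is forced by constructing $\tilde{p}_i:\Omega(\Gamma_i)\to S_i$ as the Schwarz reflection of $p_i$. Extra care is needed for critical values of $B$ lying on $\mathbb{S}^1$, which appear simultaneously as punctures in the interior and the boundary of $T_i$; but the Schottky double passes cleanly through these once the uniformization is taken on the properly punctured surface.
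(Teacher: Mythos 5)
Your proposal is correct and follows essentially the same route as the paper: uniformize the $\tau$-symmetric halves of $S_i$ by Fuchsian groups of the second kind (the paper packages this as Bers' Simultaneous Uniformization, you as the Schottky double via Schwarz reflection, which amount to the same thing for symmetric surfaces), lift $B$ on the disk to a M\"obius automorphism of $\Delta$, and use the symmetry $B\tau=\tau B$ to extend the identity $p_2\circ\alpha=B\circ p_1$ to all of $\Omega(\Gamma_1)$. The only cosmetic difference is that the paper constructs two lifts $\alpha_1,\alpha_2$ on $\Delta$ and $\Delta^*$ and glues them along $\mathbb{S}^1\setminus\Lambda$, whereas you take the canonical M\"obius extension of a single lift and check $\tau$-equivariance, which is, if anything, slightly cleaner.
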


\begin{proof}
For $i=1,2$ let $\Delta_i=\bar{\Delta}\cap S_i$ 
and $\Delta^*_i=\bar{\Delta}^*\cap S_i$.

The Simultaneous Uniformization Theorem
of Bers \cite{BersSimUniform}, ensures that there exist a Fuchsian group 
$\Gamma_2$ acting in $\hat{\C}$, with ${\Delta}/\Gamma_2={\Delta}_{2}$ and 
${\Delta}^*/\Gamma_2=\Delta^*_2$. The limit set 
${\Lambda}(\Gamma_2)  $ is contained  in $ \mathbb{S}^{1}$. 

Similarly, there is a Fuchsian group $\Gamma_1$ such that
${\Delta}^*/\Gamma_1=\Delta^*_{1}$ and ${\Delta}/\Gamma_1={\Delta}_{1}$ 
and ${\Lambda(\Gamma_1)} \subset \mathbb{S}^{1}$.

The map $B_{i}$ lifts to M\"{o}bius maps ${\alpha}_{1}:{\Delta} \rightarrow
{\Delta}$ and ${\alpha}_{2}:{\Delta}^* \rightarrow
{\Delta}^*$. Moreover, since ${\Omega}_{2}$ is a Riemann surface
anti-conformally equivalent to ${\Omega}_{1}$, we can choose ${\alpha}_{2}$ 
such that ${\alpha}_{2}(z)= \frac{1}{\bar{z}} \circ {\alpha}_{1} \circ
\frac{1}{\bar{z}}$ and these maps agree at 
$\mathbb{S}^{1}\setminus{\Lambda}(G)$. 
Being ${\Lambda}(G)$ a Cantor set, then the map ${\alpha}:
{\hat{\C}}\setminus{\Lambda}(G) \rightarrow {\hat{\C}}\setminus{\Lambda}(G)$ 
defined as ${\alpha}|{\Delta}_{i}={\alpha}_{i}$ extends to a  M\"{o}bius 
map $\alpha$ defined on the Riemann sphere. \end{proof}

By Theorem \ref{thm.Blasc.Scho}, a Blaschke map admits a Poincar\'e extension 
which follows from the diagram below:

\begin{displaymath}
    \xymatrix{
        \mathbb{B}^3 \ar[d] \ar[r]^{\alpha} &
\ar[d] \mathbb{B}^3 \\
             \B^3/\Gamma_1  \ar[r]^{\hat{B}}  &   \B^3/\Gamma_2.}
\end{displaymath}

Let us observe that $\Gamma_1$ and $\Gamma_2$ are  Schottky type groups with 
parabolic generators. Hence ${\B}^{3}/\Gamma_1$ and ${\B}^{3}/\Gamma_2$ 
are homeomorphic to the complement in ${\B}^{3}$ of a finite number of  
geodesics connecting symmetric perforations of the surfaces $S_i$.

The arguments in Theorem \ref{thm.Blasc.Scho} work in a more general situation. 
The key facts are Bers Simultaneous Uniformization Theorem and the symmetry of 
respective orbifolds. So we have

\begin{corollary}
 Let $W_1$ and $W_2$ be any two given connected symmetric orbifolds 
supported on the Riemann sphere. If  $B:W_1\rightarrow W_2$ is a 
covering symmetric with respect to $\mathbb{S}^1$, then the conclusion of  
Theorem \ref{thm.Blasc.Scho} still holds for the covering $B$.
\end{corollary}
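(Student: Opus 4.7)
The plan is to rerun the proof of Theorem \ref{thm.Blasc.Scho} verbatim, replacing the Blaschke-type data $(\Delta_i, \Delta_i^*, B_i)$ by the two halves of the orbifolds $W_i$ and the two restrictions of $B$. The two ingredients the author has already isolated as being really needed, namely Bers' Simultaneous Uniformization and symmetry under $\tau(z) = 1/\bar z$, are available under the hypotheses of the corollary, so the proof is expected to be a near copy of Theorem \ref{thm.Blasc.Scho}.

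Concretely, I would first split each orbifold along the unit circle: set $W_i^+ = W_i \cap \bar{\Delta}$ and $W_i^- = W_i \cap \bar{\Delta}^*$. The assumption that $W_i$ is symmetric with respect to $\mathbb{S}^1$ gives $W_i^- = \tau(W_i^+)$, so $W_i^+$ and $W_i^-$ are anti-conformally equivalent orbifolds. Applying Bers Simultaneous Uniformization to each pair $(W_i^+, W_i^-)$ produces a Fuchsian group $\Gamma_i$ of the second kind with $\Delta/\Gamma_i = W_i^+$, $\Delta^*/\Gamma_i = W_i^-$ and limit set $\Lambda(\Gamma_i) \subset \mathbb{S}^1$ a Cantor set (this uses that the quotient orbifolds have boundary or punctures so that the group is of the second kind).

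Next, because $B$ commutes with $\tau$, the two restrictions $B^+ := B|_{W_1^+} \to W_2^+$ and $B^- := B|_{W_1^-} \to W_2^-$ are holomorphic orbifold coverings with $B^- = \tau \circ B^+ \circ \tau$. By the Monodromy Theorem for orbifold coverings, $B^+$ lifts to a Möbius automorphism $\alpha_1: \Delta \to \Delta$, and taking $\alpha_2 := \tau \circ \alpha_1 \circ \tau: \Delta^* \to \Delta^*$ yields a compatible lift of $B^-$. Glueing these as in the proof of Theorem \ref{thm.Blasc.Scho}, the two Möbius maps $\alpha_1, \alpha_2$ agree on the complement $\mathbb{S}^1 \setminus \Lambda(\Gamma_1)$ by continuity and the symmetry choice; since this complement is an open non-empty subset of $\mathbb{S}^1$, the two maps are the same Möbius transformation, which then extends across $\Lambda(\Gamma_1)$ to a single $\alpha \in \mathrm{PSL}(2,\mathbb{C})$ defined on all of $\hat{\mathbb{C}}$. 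The commutativity of the resulting diagram and the index identity $[\Gamma_2 : \alpha \Gamma_1 \alpha^{-1}] = \deg(B)$ are then immediate from the covering degree interpretation.

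The only real subtlety, and the step I expect to demand the most care, is justifying that $\alpha_1$ and $\alpha_2$ genuinely agree on $\mathbb{S}^1 \setminus \Lambda(\Gamma_1)$ in the orbifold setting: this requires checking that the lift $\alpha_1$ extends continuously to the free boundary arcs of $\Delta/\Gamma_1$, and that the boundary extension is exactly the restriction of $B$ there. In the Blaschke situation this was automatic because $B$ preserved $\mathbb{S}^1$ globally; in the general symmetric-orbifold situation one needs to invoke the reflection principle along the arcs of $\mathbb{S}^1 \cap W_1$, which is exactly where the $\mathbb{S}^1$-symmetry of $W_i$ and $B$ is used. Once this boundary matching is in hand, the remainder of the argument is the same formal glueing used in Theorem \ref{thm.Blasc.Scho}.
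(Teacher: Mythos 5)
Your proposal is correct and follows essentially the same route as the paper, which offers no separate proof but states explicitly that the key facts are Bers' Simultaneous Uniformization Theorem and the symmetry of the orbifolds, exactly the two ingredients you isolate and rerun. Your added care about the boundary matching of the lifts along the free arcs of $\mathbb{S}^1\setminus\Lambda(\Gamma_1)$ is a reasonable fleshing-out of a step the paper leaves implicit, not a departure from its argument.
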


Let $G\mathcal{O}(P(B))$ be the grand orbit of the postcritical set $P(B)$ and 
take $S=\bar{\C}\setminus \overline{G\mathcal{O}(P(B))}$, then $S$ is an open 
disconnected surface consisting of two components $D=S\cap \Delta$ and 
$D^*=S\cap \Delta^*$ and $B:S\rightarrow S$ is a holomorphic self-covering.

\begin{corollary}\label{cor.Bla.dyn}
 There exist a Fuchsian group $\Gamma$ and $\alpha$ in $PSL(2,\mathbb{R})$ such that
$\alpha_*(\Gamma)=\alpha\Gamma\alpha^{-1}$ is a subgroup of $\Gamma$. 
Moreover, we have that  $\Omega(\Gamma)=\Delta\cup \Delta^*$ and 
$\Delta/\Gamma=D$. Finally, for every $n$ the following diagram is commutative
$\Delta^*/\Gamma=D^*$  
\begin{displaymath}
    \xymatrix{
       \Omega(\Gamma) \ar[r]^{\alpha^n} \ar[d]_{\pi_1} &
\ar[d]^{\pi_1} \Omega(\Gamma) \\
             S  \ar[r]^{B^n}  & S.}
\end{displaymath}
 
\end{corollary}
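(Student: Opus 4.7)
The plan is to apply Theorem \ref{thm.Blasc.Scho} to the self-covering $B:S\to S$, observing that in this corollary the source and target surfaces coincide. Theorem \ref{thm.Blasc.Scho} produces two Fuchsian groups of second type $\Gamma_1$ and $\Gamma_2$ with $\Omega(\Gamma_i)/\Gamma_i\cong S$ together with a M\"obius lift $\alpha:\Omega(\Gamma_1)\to \Omega(\Gamma_2)$. The first key step is to collapse $\Gamma_1$ and $\Gamma_2$ into a single group $\Gamma$. Since both uniformize the symmetric orbifold $S=D\cup D^*$ with $\Omega=\Delta\cup\Delta^*$, Bers' Simultaneous Uniformization gives a unique Fuchsian group up to conjugation in $PSL(2,\C)$, so after a M\"obius normalization one may take $\Gamma_1=\Gamma_2=\Gamma$ with $\Delta/\Gamma=D$ and $\Delta^*/\Gamma=D^*$. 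Under this identification, the lift $\alpha$ becomes a M\"obius self-map of $\Omega(\Gamma)$, and the lifting relation $B\circ\pi=\pi\circ\alpha$ forces $\alpha\Gamma\alpha^{-1}\subset \Gamma$ as a subgroup of index $\deg(B)$.

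Next I would establish that $\alpha$ lies in $PSL(2,\mathbb{R})$. Since $B$ commutes with the involution $\tau(z)=1/\bar z$ and $S$ is $\tau$-symmetric, we may repeat the symmetry argument of Theorem \ref{thm.Blasc.Scho}: choose the lift of $B|_D$ to $\Delta$ freely to obtain some $\alpha_1$, and then force the lift of $B|_{D^*}$ to $\Delta^*$ to be $\alpha_2=\tau\circ\alpha_1\circ\tau$. These two pieces agree on $\mathbb{S}^1\setminus\Lambda(\Gamma)$ (a dense subset of the circle, since $\Lambda(\Gamma)$ is a Cantor set) and glue to a single M\"obius map $\alpha$ commuting with $\tau$. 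A M\"obius map commuting with $\tau$ preserves $\mathbb{S}^1$, and since $\alpha$ sends $\Delta$ to $\Delta$ (it lifts $B|_D$), it is orientation-preserving on the circle, hence $\alpha\in PSL(2,\mathbb{R})$.

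Finally, the commutativity of the diagram for every iterate $n$ is a formal consequence: from $B\circ\pi=\pi\circ\alpha$ and $\alpha\Gamma\alpha^{-1}\subset\Gamma$, induction gives $B^n\circ\pi=\pi\circ\alpha^n$ with $\alpha^n\Gamma\alpha^{-n}\subset\Gamma$, so each $\alpha^n$ is a valid lift of $B^n$ with respect to the same uniformization $\Gamma$.

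The only genuinely delicate point is the identification $\Gamma_1=\Gamma_2=\Gamma$; Theorem \ref{thm.Blasc.Scho} is stated for possibly different groups on the source and target, and one must be careful that the normalization used to uniformize $S$ as the source of $B$ is the \emph{same} as the one used to uniformize $S$ as the target. This is precisely where one exploits the fact that $B$ is a self-covering and that the symmetric Bers uniformization is rigid (unique up to conjugation preserving the pair $(\Delta,\Delta^*)$, i.e. up to $PSL(2,\mathbb{R})$); this rigidity lets us absorb the conjugating M\"obius map into $\alpha$ itself without breaking the subgroup inclusion.
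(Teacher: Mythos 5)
Your argument is correct and follows essentially the same route as the paper, which simply reruns the proof of Theorem~\ref{thm.Blasc.Scho} for the self-covering $B:S\to S$, using the symmetry of $S$ to produce a single Fuchsian group $\Gamma$ and a single M\"obius lift $\alpha$ commuting with $\tau(z)=1/\bar z$; your extra step of first invoking Theorem~\ref{thm.Blasc.Scho} with two groups and then collapsing $\Gamma_1=\Gamma_2$ by rigidity of simultaneous uniformization is harmless but avoidable, since one may fix the same uniformization of $S$ on source and target from the outset and read off $\alpha\Gamma\alpha^{-1}\subset\Gamma$ directly from $\pi\circ\alpha=B\circ\pi$. One small inaccuracy: in this corollary $\Omega(\Gamma)=\Delta\cup\Delta^*$, so $\Lambda(\Gamma)$ is all of $\mathbb{S}^1$ rather than a Cantor set, and the gluing of $\alpha_1$ with $\tau\circ\alpha_1\circ\tau$ holds not because they agree on $\mathbb{S}^1\setminus\Lambda(\Gamma)$ (which is empty) but because every element of $Aut(\Delta)$ commutes with the reflection $\tau$, so $\tau\circ\alpha_1\circ\tau$ is just the M\"obius (Schwarz-reflection) continuation of $\alpha_1$ to $\Delta^*$.
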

\begin{proof}
The proof is essentially the same as in Theorem \ref{thm.Blasc.Scho} using the
symmetry of the surfaces plus the fact that $B$ defines a self-covering of 
the surface $S$.
\end{proof}

As noted after Theorem \ref{thm.Blasc.Scho}, 
we have that the Poincar\'e extension of $B$ is an endomorphism of 
$\mathbb{B}^3/\Gamma$, so this Poincar\'e extension is dynamical. 

Again, the argument in the corollary above can be generalized as in the 
following corollary:

\begin{corollary}
Given a Blaschke map $B$, let $A$ be a completely invariant symmetric closed subset of 
$\bar{\mathbb{C}}$ which contains all critical points of $B$. Assume that 
$\bar{\mathbb{C}}\setminus A= W$ consists of exactly two components 
$U=W\cap \Delta^*$ and $U=W\cap \Delta^*$. 
Then $\{B^n:U^*\rightarrow U^*\}$ and $\{B^n:U\rightarrow U\}$ are 
semigroups of  coverings and the Poincar\'e extension in this situation is 
dynamical.
\end{corollary}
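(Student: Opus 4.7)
The plan is to mimic the argument of Corollary \ref{cor.Bla.dyn} with $S$ replaced by $W$. First, I verify that $B$ restricts to self-coverings of $U$ and $U^*$. Since $A$ is completely invariant under $B$ and contains every critical point, forward invariance forces $A$ to contain every critical value, so $W=\bar{\C}\setminus A$ meets no critical values. Combined with $B^{-1}(W)=W$, this makes $B|_W : W\to W$ an unbranched covering. Because $B$ preserves the unit disk and its complement, and each of $U=W\cap \Delta$ and $U^*=W\cap \Delta^*$ is connected, we get $B(U)=U$ and $B(U^*)=U^*$. Hence every composition $B^n|_U$ and $B^n|_{U^*}$ is again a covering, so $\{B^n|_U\}$ and $\{B^n|_{U^*}\}$ are semigroups of coverings.

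Next, I would construct the uniformizing Fuchsian group. Since $A$ is symmetric with respect to $\mathbb{S}^1$, the surface $W$ is symmetric with $\tau(U)=U^*$, where $\tau(z)=1/\bar z$. Applying Bers' Simultaneous Uniformization Theorem exactly as in Theorem \ref{thm.Blasc.Scho} (and its symmetric version in the preceding corollary), produces a Fuchsian group $\Gamma$ of second type such that $\Omega(\Gamma)=\Delta\cup\Delta^*$, $\Delta/\Gamma=U$, $\Delta^*/\Gamma=U^*$, and $\Lambda(\Gamma)\subset \mathbb{S}^1$. Lifting the two restrictions of $B$ to $\Delta$ and $\Delta^*$ gives M\"obius maps $\alpha_1$ and $\alpha_2$, and the symmetry $B\circ\tau=\tau\circ B$ allows us to choose them so that $\alpha_2=\tau\,\alpha_1\,\tau$. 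They therefore coincide on the dense open subset $\mathbb{S}^1\setminus\Lambda(\Gamma)$ and glue to a single M\"obius transformation $\alpha\in PSL(2,\mathbb{R})$ with $\alpha\Gamma\alpha^{-1}\subset\Gamma$.

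For the dynamical assertion, I iterate. The inclusion $\alpha\Gamma\alpha^{-1}\subset\Gamma$ yields $\alpha^n\Gamma\alpha^{-n}\subset\Gamma$ for every $n\geq 1$, giving commutative diagrams
\begin{displaymath}
    \xymatrix{
       \Omega(\Gamma) \ar[r]^{\alpha^n} \ar[d]_{\pi} & \Omega(\Gamma) \ar[d]^{\pi} \\
             W  \ar[r]^{B^n}  & W.}
\end{displaymath}
Each $\alpha^n$ extends classically to an isometry of $\mathbb{B}^3$ descending to an endomorphism of the \emph{single} manifold $\mathbb{B}^3/\Gamma$ that extends $B^n$. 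Since $M_1=M_2=\mathbb{B}^3/\Gamma$ for every $n$ and the extensions compose, the Poincar\'e extension is dynamical in the sense of the introduction.

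The main obstacle is the gluing step: one must guarantee that the two M\"obius lifts $\alpha_1$ on $\Delta$ and $\alpha_2$ on $\Delta^*$ are in fact restrictions of one global M\"obius transformation on $\bar{\C}$. This is exactly where the hypothesis that $\Lambda(\Gamma)\subset \mathbb{S}^1$ is nowhere dense enters: the symmetric pair $(\alpha_1,\alpha_2)$ agrees on a dense subset of $\mathbb{S}^1$ and hence glues by analytic continuation, reproducing the argument at the end of the proof of Theorem \ref{thm.Blasc.Scho}. Everything else is a direct repetition of the reasoning already established for Corollary \ref{cor.Bla.dyn}.
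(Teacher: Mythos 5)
Your proposal is correct and follows exactly the route the paper intends: the paper gives no explicit proof of this corollary, merely asserting that ``the argument in the corollary above can be generalized,'' and your write-up is precisely that generalization of Corollary \ref{cor.Bla.dyn} and Theorem \ref{thm.Blasc.Scho} spelled out (covering property of $B|_W$ from complete invariance, symmetric simultaneous uniformization, gluing of the two lifts across $\mathbb{S}^1$, and iteration of $\alpha\Gamma\alpha^{-1}\subset\Gamma$). You also correctly repair the statement's typo by reading the two components as $U=W\cap\Delta$ and $U^*=W\cap\Delta^*$.
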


Now let us consider the case of decomposable Blaschke maps.  Assume that
$B=B_1\circ B_2$ is a decomposable Blaschke map where $B_1$ and
$B_2$ are Blaschke maps. Then if $Q=B_2\circ B_1$ there are semiconjugacies. 

$$\begin{CD} \C @>B=B_1\circ B_2>> \C\\
@V{B_2}VV @VV{B_2}V\\
\C @>Q=B_2\circ B_1>> \C\\
@V{B_1}VV @VV{B_1}V\\
\C @>{B}>> \C.\end{CD} $$

\begin{corollary}\label{cor.10}

If $B=B_1\circ B_2$ and $Q=B_2\circ B_1$ there exist two Fuchsian groups 
$\Gamma(B)$ and
$\Gamma(Q)$ satisfying the conditions of Corollary \ref{cor.Bla.dyn} and
$\alpha_B$ and $\alpha_Q$ in $PSL(2,\mathbb{R})$. So that there are two
elements $\beta_1$ and $\beta_2$ in $PSL(2,\mathbb{R})$ with
$\alpha(B)=\beta_1\circ \beta_2$ and $\alpha(Q)=\beta_2\circ \beta_1.$ Such
that the following diagrams are commutative.

\begin{displaymath}
\xymatrix{
& \Omega(\Gamma_B) \ar[rr]^{\beta_2}\ar'[d]^{P_B}[dd]
& &\Omega(\Gamma_Q) \ar[dd]^{P_Q}
\\
\Omega(\Gamma_B)\ar[ur]^{\alpha_B}\ar[rr]\ar[dd]^{P_B}
& &\Omega(\Gamma_Q)\ar[ur]_{\alpha_Q}\ar[dd]
\\
& S_B \ar'[r][rr]_{B_2}
& & S_Q
\\
S_B\ar[rr]_{B_2}\ar[ur]_{B}
& & S_Q \ar[ur]_{Q}
}
\end{displaymath}

\begin{displaymath}
\xymatrix{
& \Omega(\Gamma_Q) \ar[rr]^{\beta_1}\ar'[d]^{P_Q}[dd]
& &\Omega(\Gamma_B) \ar[dd]^{P_B}
\\
\Omega(\Gamma_Q)\ar[ur]^{\alpha_Q}\ar[rr]\ar[dd]^{P_Q}
& &\Omega(\Gamma_B)\ar[ur]_{\alpha_B}\ar[dd]
\\
& S_Q \ar'[r][rr]_{B_1}
& & S_B
\\
S_Q\ar[rr]_{B_1}\ar[ur]_{Q}
& & S_B \ar[ur]_{B}
}
\end{displaymath}

\end{corollary}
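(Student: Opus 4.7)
The plan is to invoke Corollary~\ref{cor.Bla.dyn} separately for $B = B_1\circ B_2$ and for $Q = B_2\circ B_1$, construct the intermediate M\"obius maps $\beta_1$ and $\beta_2$ as lifts of $B_1$ and $B_2$ themselves, and then \emph{define} $\alpha_B := \beta_1\circ \beta_2$ and $\alpha_Q := \beta_2\circ \beta_1$. To set things up, put $S_B = \bar{\C}\setminus \overline{G\mathcal{O}_B(P(B))}$ and $S_Q = \bar{\C}\setminus \overline{G\mathcal{O}_Q(P(Q))}$, where $P(\cdot)$ denotes postcritical sets. Using the tautological semiconjugacies $B\circ B_1 = B_1\circ Q$ and $Q\circ B_2 = B_2\circ B$, together with $CP(B) = CP(B_2)\cup B_2^{-1}(CP(B_1))$ and $CP(Q) = CP(B_1)\cup B_1^{-1}(CP(B_2))$, a direct chase shows that $B_1$ sends the grand $Q$-orbit of $P(Q)$ into the grand $B$-orbit of $P(B)$, and likewise for $B_2$. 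Since neither $CP(B_1)$ nor $CP(B_2)$ meets the relevant complement, the maps $B_1\colon S_Q\to S_B$ and $B_2\colon S_B\to S_Q$ are unramified holomorphic coverings, and both $S_B, S_Q$ are $\mathbb{S}^1$-symmetric because $B_1, B_2$ commute with $\tau(z)=1/\bar z$.

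Next, apply Corollary~\ref{cor.Bla.dyn} to $B$ and to $Q$ to obtain Fuchsian groups $\Gamma_B, \Gamma_Q$ of second kind uniformizing $S_B$ and $S_Q$ with limit sets in $\mathbb{S}^1$. Because $B_1$ and $B_2$ are symmetric holomorphic coverings between such surfaces, the argument of Theorem~\ref{thm.Blasc.Scho} (monodromy plus Schwarz reflection across the limit sets) produces M\"obius lifts $\beta_1\colon \Omega(\Gamma_Q)\to \Omega(\Gamma_B)$ and $\beta_2\colon \Omega(\Gamma_B)\to \Omega(\Gamma_Q)$ in $PSL(2,\mathbb{R})$, together with inclusions $\beta_2\Gamma_B\beta_2^{-1}\subset \Gamma_Q$ and $\beta_1\Gamma_Q\beta_1^{-1}\subset \Gamma_B$. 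Setting $\alpha_B := \beta_1\circ\beta_2$ and $\alpha_Q := \beta_2\circ\beta_1$ then yields elements of $PSL(2,\mathbb{R})$ lifting $B$ and $Q$ respectively, automatically satisfying $\alpha_B\Gamma_B\alpha_B^{-1}\subset\Gamma_B$ and $\alpha_Q\Gamma_Q\alpha_Q^{-1}\subset\Gamma_Q$, so the hypotheses of Corollary~\ref{cor.Bla.dyn} are met for both $(\Gamma_B,\alpha_B)$ and $(\Gamma_Q,\alpha_Q)$. Every square of each prism then commutes: the vertical covering faces commute because $\beta_i$ is a M\"obius lift of $B_i$, the slanted top triangles commute by the definitions of $\alpha_B$ and $\alpha_Q$, and the bottom triangles merely express the identities $B = B_1\circ B_2$ and $Q = B_2\circ B_1$.

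The main obstacle is coherent normalization: a lift of $B_i$ is determined only up to composition with an element of $\Gamma_B$ or $\Gamma_Q$, and independent applications of Corollary~\ref{cor.Bla.dyn} to $B$ and $Q$ would yield $\alpha_B, \alpha_Q$ with separate deck ambiguities, so there is no a priori reason that such independent choices satisfy $\alpha_B = \beta_1\circ\beta_2$ and $\alpha_Q = \beta_2\circ\beta_1$ simultaneously. Reversing the usual order — first choosing $\beta_1, \beta_2$ and then defining $\alpha_B, \alpha_Q$ through the compositions — removes this ambiguity and forces the two prism diagrams to commute on the nose.
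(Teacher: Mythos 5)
Your proposal follows essentially the same route as the paper's proof, which simply observes that $B_1\colon S_Q\to S_B$ and $B_2\colon S_B\to S_Q$ are coverings and therefore admit M\"obius lifts $\beta_1,\beta_2$ making the diagrams commute. Your extra care in \emph{defining} $\alpha_B=\beta_1\circ\beta_2$ and $\alpha_Q=\beta_2\circ\beta_1$ to kill the deck-transformation ambiguity is a detail the paper leaves implicit, but the underlying argument is the same.
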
\label{cor.Blasc.decomp}
\begin{proof}
 Note that $B_2:S_B\rightarrow S_Q$ and $B_1:S_Q\rightarrow S_B$ define
 coverings, hence there are M\"obius maps 
$\beta_1:\Omega(\Gamma_Q)\rightarrow \Omega(\Gamma_B)$ and
$\beta_2:\Omega(\Gamma_B)\rightarrow \Omega(\Gamma_Q)$ which make the diagrams
commutative.
\end{proof}

As a consequence of Corollary \ref{cor.10} we have the following conclusion.

\begin{proposition}\label{prop.11}
Let $B=B_1\circ B_2$, let $\hat{B}$ be a dynamical extension, then there are
Poincar\'e extensions  
$\hat{B}_1$ and $\hat{B}_2$ of $B_1$ and $B_2$ respectively, such that 
$$\hat{B}=\hat{B}_1\circ\hat{B}_2.$$
Moreover, there exist $\hat{Q}$ a dynamical extension of $Q:=B_2\circ B_1$ such
that 
$$\hat{Q}=\hat{B}_2\circ\hat{B}_1.$$
\end{proposition}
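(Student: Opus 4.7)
The plan is to descend the M\"obius-level factorization of $\alpha_B$ provided by Corollary \ref{cor.10} to a factorization of hyperbolic endomorphisms on the quotient manifolds, exploiting the fact that classical Poincar\'e extension is a group homomorphism from $PSL(2,\mathbb{C})$ into $\mathrm{Isom}(\mathbb{H}^3)$.

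First, I unpack what a dynamical extension of $B$ means in this setting: it is determined by a Fuchsian group $\Gamma_B$ as in Corollary \ref{cor.Bla.dyn} together with a lift $\alpha_B\in PSL(2,\mathbb{R})$ satisfying $\alpha_B\Gamma_B\alpha_B^{-1}\subset\Gamma_B$, and $\hat{B}$ is the endomorphism of $M_B:=\mathbb{B}^3/\Gamma_B$ induced by the classical Poincar\'e extension $\hat{\alpha}_B$. By Corollary \ref{cor.10} I can write $\alpha_B=\beta_1\circ\beta_2$ with $\beta_i\in PSL(2,\mathbb{R})$ lifting $B_i$, and simultaneously obtain a Fuchsian group $\Gamma_Q$ uniformizing $S_Q$ together with $\alpha_Q=\beta_2\circ\beta_1$ making the two three-dimensional diagrams of that corollary commute.

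Second, I take the classical Poincar\'e extensions $\hat{\beta}_1,\hat{\beta}_2,\hat{\alpha}_B,\hat{\alpha}_Q$ to isometries of $\mathbb{H}^3$. Because extension is a homomorphism, the sphere-level identities lift verbatim:
\[
\hat{\alpha}_B=\hat{\beta}_1\circ\hat{\beta}_2,\qquad \hat{\alpha}_Q=\hat{\beta}_2\circ\hat{\beta}_1.
\]
The commutativity of the bottom and vertical faces of the diagrams in Corollary \ref{cor.10} forces $\beta_2\Gamma_B\beta_2^{-1}\subset\Gamma_Q$ and $\beta_1\Gamma_Q\beta_1^{-1}\subset\Gamma_B$, so $\hat{\beta}_2$ descends to a Poincar\'e extension $\hat{B}_2:M_B\to M_Q$ of $B_2$, and $\hat{\beta}_1$ descends to a Poincar\'e extension $\hat{B}_1:M_Q\to M_B$ of $B_1$.

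Third, passing to quotients in the two lifted identities gives
\[
\hat{B}_1\circ\hat{B}_2=\hat{B}\ \text{on}\ M_B,\qquad \hat{B}_2\circ\hat{B}_1=:\hat{Q}\ \text{on}\ M_Q,
\]
the first equation recovering the given dynamical extension of $B$ and the second defining a Poincar\'e self-map of $M_Q$. That $\hat{Q}$ is dynamical in the sense of the introduction follows from Corollary \ref{cor.Bla.dyn} applied to $Q$ with group $\Gamma_Q$ and lift $\alpha_Q$. The main obstacle, which I would address by a careful reading of the three-dimensional diagrams of Corollary \ref{cor.10}, is ensuring that a \emph{single} group $\Gamma_Q$ works on both sides: the group uniformizing $S_Q$ viewed as the target of $B_2$ must coincide with the group uniformizing $S_Q$ viewed as the source of $B_1$, so that $\hat{B}_1\circ\hat{B}_2$ is literally the composition of two maps on the same quotient rather than a composition up to an intervening conjugation. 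This is exactly what gluing the two diagrams of Corollary \ref{cor.10} along their common $S_Q$-face encodes.
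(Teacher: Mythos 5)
Your argument is correct and follows exactly the route the paper intends: the paper states Proposition \ref{prop.11} with no written proof, presenting it as an immediate consequence of Corollary \ref{cor.10}, and your proposal simply supplies the missing details (the homomorphism property of the classical Poincar\'e extension applied to $\alpha_B=\beta_1\circ\beta_2$ and $\alpha_Q=\beta_2\circ\beta_1$, the conjugation inclusions $\beta_2\Gamma_B\beta_2^{-1}\subset\Gamma_Q$ and $\beta_1\Gamma_Q\beta_1^{-1}\subset\Gamma_B$ forced by the commuting diagrams, and descent to the quotient manifolds). Your closing concern about a single $\Gamma_Q$ serving as both target of $\beta_2$ and source of $\beta_1$ is precisely what Corollary \ref{cor.10} guarantees, so nothing further is needed.
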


The following theorem summarize the results above and show that the 
corresponding extensions are geometric.

\begin{theorem}\label{th.Blasch.dyn}

Let $B$ be a Blaschke map, then:

\begin{itemize}
 \item[i)] The extension constructed in Theorem \ref{thm.Blasc.Scho} is 
geometric.

\item[ii)]  The dynamical extension constructed in Corollary \ref{cor.Bla.dyn} 
is geometric. 

\item[iii)] The extensions in Corollary \ref{cor.Blasc.decomp} are all 
geometric.

\end{itemize}
Each of the extensions in items (i)-(iii) is conformally natural with respect 
to the group of M\"obius transformations that leaves the unit circle invariant.

\end{theorem}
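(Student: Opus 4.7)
The plan is to treat all three items in parallel, since they share the same basic geometric picture: in each case the Poincar\'e extension is induced by a M\"obius map (namely $\alpha$ in (i) and (ii), or the intertwining factors $\beta_1,\beta_2$ in (iii)) between the discontinuity sets of Fuchsian groups of second type whose limit sets lie on $\mathbb{S}^1$. For the first requirement in the definition of a geometric extension, the observation already recorded after Theorem~\ref{thm.Blasc.Scho} is decisive: each quotient $M_i=(\mathbb{H}^3 \cup \Omega(\Gamma_i))/\Gamma_i$ is homeomorphic to $\bar{\B}^3$ minus a countable union of hyperbolic geodesics (or, where parabolic fixed points occur, families of geodesic rays sharing a common endpoint at infinity). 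These geodesics join the symmetric pairs of punctures of $S_i$, and they realize the set $\Phi_i(M_i\cup\partial M_i)$ in the form demanded by condition~(1).

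For condition~(2), note that $\alpha$ extends to a hyperbolic isometry of $\mathbb{H}^3$, which maps geodesics to geodesics. Because $\alpha\Gamma_1\alpha^{-1}\subset\Gamma_2$, the $\Gamma_1$-orbit of any lifted ``missing'' geodesic in $\mathbb{B}^3$ is sent into a $\Gamma_2$-orbit of geodesics; hence the induced map descends to a continuous endomorphism of $\bar{\B}^3$ that extends $\h B$ and carries complementary geodesics to complementary geodesics. This establishes item~(i). Item~(ii) is the special case $\Gamma_1=\Gamma_2=\Gamma$ provided by Corollary~\ref{cor.Bla.dyn}, and item~(iii) follows factor-by-factor using $\beta_1,\beta_2$ from Corollary~\ref{cor.Blasc.decomp}; Proposition~\ref{prop.11} then guarantees that the composition of the geometric extensions of $B_1$ and $B_2$ agrees with the geometric extension of $B$.

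For conformal naturality with respect to the circle-preserving M\"obius group, the discussion in the introduction reduces the claim to showing that whenever $h_1,h_2$ are M\"obius transformations preserving $\mathbb{S}^1$ with $B\circ h_2=h_1\circ B$, their classical Poincar\'e extensions descend to M\"obius automorphisms of $M_i$. The semiconjugacy forces $h_i(CV(B))=CV(B)$ and hence $h_i(S_i)=S_i$. Since $h_i$ also preserves $\mathbb{S}^1$, it respects the symmetry $z\mapsto 1/\bar z$ underlying the Bers simultaneous uniformization used in Theorem~\ref{thm.Blasc.Scho}; by the uniqueness of this symmetric uniformization, $h_i$ normalizes $\Gamma_i$, and its Poincar\'e extension therefore descends to a M\"obius automorphism of $M_i$, as required.

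The most delicate step in the plan is the normalizer claim in the last paragraph: a general M\"obius semiconjugacy of $B$ need not preserve any Fuchsian uniformization, and the restriction to the circle-preserving subgroup is exactly what makes $h_i$ compatible with the symmetric structure used to build $\Gamma_i$. An additional bookkeeping issue appears in item~(iii), where the identifications $\Phi$ for the manifolds of $\Gamma_B$ and $\Gamma_Q$ must be chosen consistently so that the factorization $\h B=\h B_1\circ\h B_2$ of Proposition~\ref{prop.11} lifts to an honest equality of endomorphisms of $\mathbb{H}^3$ rather than merely of the punctured balls.
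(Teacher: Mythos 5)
Your outline follows the paper's general strategy (prove (i), treat (ii) and (iii) as variants, reduce naturality to the behaviour of circle-preserving semiconjugacies), but it omits the one construction that actually carries the paper's proof: the explicit equivariant realization of the quotient manifolds inside $\bar{\B}^3$. The paper builds coverings $q_i:\mathbb{H}^3\cup\Omega(\Gamma_i)\rightarrow\bar{\B}^3$ by writing $\bar{\B}^3=\bigcup_{0\leq\phi\leq\pi}\tau_\phi(\Delta)$, where $\tau_\phi$ is the M\"obius rotation about $\partial\Delta$, and setting $q_i(z,\phi)=\tau_\phi(\pi_i(z))$; this is what produces the identification maps $\Phi_i$ with $q_i|_{\Omega(\Gamma_i)}=\pi_i$, i.e.\ with the correct boundary values. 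In your argument you assert that ``the induced map descends to a continuous endomorphism of $\bar{\B}^3$'' and that the missing geodesics ``realize the set $\Phi_i(M_i\cup\partial M_i)$ in the form demanded by condition (1),'' but both statements presuppose embeddings $\Phi_i:M_i\rightarrow\bar{\B}^3$ that you never construct. A homeomorphism type for $M_i$ is not enough: the definition of a geometric extension requires a specific $\Phi_i$ extending a boundary identification $\phi_i:\partial M_i\rightarrow S_i$, and knowing that $\alpha$ maps geodesics to geodesics upstairs says nothing about the quotient map until such an embedding is fixed. This is a genuine gap, not a stylistic difference.

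The same omission undercuts your naturality argument. Reducing to the claim that the Poincar\'e extensions of $h_1,h_2$ act as M\"obius automorphisms of $M_i$ is consistent with the discussion in the introduction, and your observation that $h_i(CV(B))=CV(B)$ and $h_i(S_i)=S_i$ is correct. But conformal naturality compares the constructed extension with the \emph{classical} Poincar\'e extensions $\hat h_i$ acting on the ambient ball, so one must check that $\hat h_i$ preserves $\Phi_i(M_i)$ and conjugates to the descended automorphism; ``uniqueness of the symmetric uniformization'' only gives $h_i\Gamma_ih_i^{-1}$ conjugate to $\Gamma_i$ and does not by itself identify the two actions on $\bar{\B}^3$. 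In the paper this is immediate from the formula $q_i(z,\phi)=\tau_\phi(\pi_i(z))$ together with the fact that $\tau_\phi$ commutes with every M\"obius map preserving the unit circle, which makes the whole embedding equivariant under that subgroup. To repair your proof you should supply the construction of $q_i$ (or an equivalent explicit $\Phi_i$) and derive both the geometric conditions and the equivariance from it, rather than asserting them.
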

\begin{proof}
We will show item (i), the proof the other items apply similar arguments to
the extensions constructed in Corollary \ref{cor.Bla.dyn} and Proposition 
\ref{prop.11}. Again, the important feature is that the corresponding surfaces 
are symmetric with respect to the unit circle.
According to Theorem \ref{thm.Blasc.Scho} there are manifolds
$M_1$ and $M_2$, M\"obius projections $p_1$ and $p_2$, and a Poincar\'e 
extension $\hat{B}$ of 
$B$ such
that the following diagram is commutative
\begin{displaymath}
    \xymatrix{\mathbb{H}^3\cup \Omega(\Gamma_1)
         \ar[d]_{p_1} \ar[r]^{ Id} & 
\ar[d]^{p_2}  \mathbb{H}^3\cup\Omega(\Gamma_2) \\ M_1
           \ar[r]^{\hat{B}}  & M_2.  }
\end{displaymath}
such that  $p_i|_{\Omega(\Gamma_i)}=\pi_i$.
Now, we construct   universal coverings $q_i$ which maps $\mathbb{H}^3\cup
\Omega(\Gamma_i)$ in $\bar{\mathbb{B}}^3$ and $q_i(x)=q_i(y)$ if, and only if, 
there exist a $\gamma_i$ in $\Gamma_i$ with $\gamma_i(x)=y$, so that 
$$q_i|_{\Omega(\Gamma_i)}=p_i|_{\Omega(\Gamma_i)}= \pi_i.$$ By Theorem
\ref{thm.Blasc.Scho}, the group  $\Gamma_2$ acts on the unit disk which
belongs to the boundary of $\bar{\mathbb{B}}^3 \cap \mathbb{R}^2 $ so that
$\pi_2(\Delta)=S_2\cap \Delta$ also belongs to the boundary of 
$\bar{\mathbb{B}}^3\cap \mathbb{R}^2 $. Let $\tau_\phi$ be the 
M\"obius rotation, in 
$\mathbb{R}^3$, with respect to $\partial \Delta$ of angle $\phi$.
Then $$\bar{\mathbb{B}}^3=\cup_{0\leq \phi \leq \pi} \tau_\phi(\Delta)$$  and
$\tau_\pi:\Delta\rightarrow \Delta^*$ is the map $z\mapsto 1/\bar{z}$ 
in the holomorphic coordinate of $\Delta.$ Define $q_i(z,\phi)=(\tau_\phi\circ
\pi_i(z))$ such that $\tau_\phi$ commutes with 
$Aut(\Delta)\simeq PSL(2,\mathbb{R}).$
Furthermore, $\tau_\phi$ commutes with any M\"obius map that leaves the unit
circle invariant (for instance $z\mapsto 1/z$).

Then $M_i=q_i(\mathbb{H}^3\cup \Omega(\Gamma))$ are subsets of 
$\bar{\mathbb{B}}^3$  and the respective Poincar\'e
extension is conformally natural with respect to the group of M\"obius map that 
leaves the unit circle invariant.

\end{proof}
 \begin{corollary}
 If in  Proposition \ref{prop.11}, $B_1=B_2$ then $\hat{B}_1=\hat{B}_2$. Using
induction 
If $B=B_1^n$ then for every dynamical extension $\hat{B}$ of $B$, there exist a 
dynamical extension $\hat{B}_1$ of $B_1$ such that $\hat{B}=\hat{B}_1^n$.
\end{corollary}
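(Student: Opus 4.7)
The plan is to handle the case $B_1 = B_2$ as the base case, then proceed by induction on $n$.

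For the base case, specialize Corollary \ref{cor.10} to $B_1 = B_2$. Then $B = Q = B_1^2$, so the two surfaces $S_B, S_Q$ coincide, the two Fuchsian uniformizations $\Gamma_B, \Gamma_Q$ may be taken equal to a common group $\Gamma$, and $\alpha_B = \alpha_Q$. The two M\"obius elements $\beta_1, \beta_2 \in PSL(2,\mathbb{R})$ appearing in Corollary \ref{cor.10} are now both lifts of the same self-covering $B_1 : S \to S$ along the single orbit projection $\pi : \Omega(\Gamma) \to S$, and the constraints $\beta_1 \circ \beta_2 = \beta_2 \circ \beta_1 = \alpha_B$ force them to commute. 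By the symmetry of the two commutative diagrams of Corollary \ref{cor.10} (which become identical when $B_1 = B_2$), the construction of Proposition \ref{prop.11} applied to this data produces the same lift twice; that is, one may take $\beta_1 = \beta_2$, giving $\alpha_B = \beta_1^2$. Since the Poincar\'e extensions of equal M\"obius transformations are equal, $\hat{B}_1 = \hat{B}_2$.

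For the inductive step, assume the statement for exponent $n-1$ and let $\hat{B}$ be a dynamical extension of $B = B_1^n$. By Corollary \ref{cor.Bla.dyn}, a single Fuchsian group $\Gamma$ uniformizes the surface $S = \bar{\mathbb{C}} \setminus \overline{G\mathcal{O}(P(B_1))}$ for every iterate of $B_1$, since their postcritical grand orbits coincide. Factoring $B = B_1 \circ B_1^{n-1}$ and applying Proposition \ref{prop.11} produces a Poincar\'e extension $\hat{B}_1'$ of $B_1$ and a Poincar\'e extension $\hat{D}$ of $B_1^{n-1}$ on the common manifold $\mathbb{H}^3/\Gamma$, so both are dynamical, with $\hat{B} = \hat{B}_1' \circ \hat{D}$. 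The inductive hypothesis yields a dynamical extension $\hat{B}_1$ of $B_1$ with $\hat{D} = \hat{B}_1^{n-1}$; comparing the two representations of $\hat{B}$ and invoking the base case (both $\hat{B}_1'$ and $\hat{B}_1$ are dynamical lifts of $B_1$ factoring the same $\hat{B}$) yields $\hat{B}_1' = \hat{B}_1$, and therefore $\hat{B} = \hat{B}_1^n$.

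The main obstacle is the rigidity step in the base case: Proposition \ref{prop.11} presents the factorization $\hat{B} = \hat{B}_1 \circ \hat{B}_2$ only up to the deck-transformation ambiguity inherent in the choice of M\"obius lift, so one must argue that the specific construction coming from Corollary \ref{cor.10} produces genuinely equal lifts (rather than merely conjugate ones in $\Gamma$) when $B_1 = B_2$. The resolution lies in the commutativity $\beta_1 \beta_2 = \beta_2 \beta_1$ together with the normalization forced by the dynamical condition $M_1 = M_2$; these jointly pin down the diagonal solution $\beta_1 = \beta_2$. Once that rigidity is in hand, the inductive bookkeeping is mechanical.
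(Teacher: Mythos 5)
The paper states this corollary without proof, so there is no in-paper argument to compare against; judged on its own, your proposal has the right architecture (identify the two factors in the base case, then induct), but the key rigidity step does not hold as you state it. You claim that the relations $\beta_1\circ\beta_2=\beta_2\circ\beta_1=\alpha_B$, ``together with the normalization forced by the dynamical condition,'' pin down the diagonal solution $\beta_1=\beta_2$. Commutativity does no such work: inside any one-parameter subgroup of $PSL(2,\mathbb{R})$ containing $\alpha_B$ there are infinitely many commuting factorizations $\alpha_B=\beta_1\circ\beta_2$ with $\beta_1\neq\beta_2$, and nothing in the displayed relations excludes them. The constraint that actually matters is the one you mention only in passing: when $B_1=B_2$ the two diagrams of Corollary \ref{cor.10} collapse to a single one, so $\beta_1$ and $\beta_2$ are both M\"obius lifts of the same covering $B_1:S\rightarrow S$ through the same orbit projection $\pi:\Omega(\Gamma)\rightarrow S$. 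Two such lifts need not be equal --- they differ by left composition with a deck transformation $\gamma\in\Gamma$ --- but precisely for that reason they induce the \emph{same} endomorphism of $M=\mathbb{H}^3/\Gamma$ and of $S=\Omega(\Gamma)/\Gamma$. Since $\hat{B}_1$ and $\hat{B}_2$ are the induced quotient maps, $\hat{B}_1=\hat{B}_2$ follows with no appeal to commutativity and no need to force $\beta_1=\beta_2$ on the nose.

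The same observation repairs the inductive step, which as written is circular: you ``invoke the base case'' to identify $\hat{B}_1'$ with the extension supplied by the inductive hypothesis, but the base case only compares the two factors produced by one application of Proposition \ref{prop.11} to $B_1\circ B_1$; it says nothing about two dynamical extensions of $B_1$ arising from different decompositions. What you need is the uniqueness statement that on the fixed manifold $\mathbb{H}^3/\Gamma$ (with $\Gamma$ as in Corollary \ref{cor.Bla.dyn}; your remark that $S$ is the same for $B_1$ and all its iterates, because the grand orbits of the postcritical sets coincide, is correct and worth keeping) any two Poincar\'e extensions of $B_1$ coincide --- and this is exactly the deck-transformation remark again. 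With that lemma in hand the induction is immediate; in fact it can be bypassed entirely, since Corollary \ref{cor.Bla.dyn} already exhibits $\alpha^n$ as a lift of $B_1^n$, so the dynamical extension of $B_1^n$ is $(\hat{B}_1)^n$ directly.
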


In the case (i) of Theorem \ref{th.Blasch.dyn}, let
${Q}_i:\mathbb{H}^3 \rightarrow \bar{\mathbb{B}}^3$ be other extensions of the
projections $\pi_i$, then there are continuous maps $h_i:M_i\rightarrow 
\bar{\mathbb{B}}^3$ such that $Q_i=h_i\circ q_i.$ Where $q_i$ are the
extensions constructed on the proof of the Theorem \ref{th.Blasch.dyn}.

Let us put $K={Q}_2\circ{Q}^{-1}_1$ where the composition is defined. If 
$\hat{B}$ is the
geometric extension of $B$ from Theorem \ref{th.Blasch.dyn}, then $K\circ 
h_2=h_1\circ \hat{B}$. 

In the case (ii) of Theorem \ref{th.Blasch.dyn}, assume $Q_i$ is another 
extension of $\pi_i$. Again, put $K={Q}\circ \alpha_B\circ{Q}^{-1}$.
If $K$ is a map, then $K$ is semiconjugated to $\hat{B}.$

\begin{theorem}\label{semigroup.Blasch}
 Let $S$ be a semigroup of Blaschke maps, then the extension constructed in
Theorem \ref{th.Blasch.dyn} defines in $S$ a geometric homomorphic conformally
natural extension preserving degree if, and only if, $S$ does not intersect the
bi-orbit of $f(z)=z^n$ with respect to $Aut(\Delta)$.
\end{theorem}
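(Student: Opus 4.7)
The plan is to prove both directions of the biconditional by tracking what the Fuchsian-group construction of Theorem~\ref{thm.Blasc.Scho} does for a Blaschke map $B$ according to whether or not $B$ lies in the $Aut(\Delta)$--bi-orbit of $z^n$.

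For the forward (necessity) direction, suppose $S$ contains some $B = g \circ z^n \circ h$ with $g, h \in Aut(\Delta)$. A direct chain-rule computation shows that the critical points of $B$ are $h^{-1}(0)$ and $h^{-1}(\infty)$, so $CV(B) = \{g(0), g(\infty)\}$ has exactly two elements; equivalently, all zeros of $B$ inside $\Delta$ coincide at a single point. Consequently $S_2(B) = \bar{\mathbb{C}} \setminus CV(B)$ is conformally $\mathbb{C}^*$, not a hyperbolic surface. The hypothesis of Theorem~\ref{thm.Blasc.Scho}---existence of a Fuchsian group of the second kind with Cantor limit set in $\mathbb{S}^1$ uniformizing $S_2(B)$---then fails, since any group uniformizing $\mathbb{C}^*$ is cyclic parabolic with a one-point limit set. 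Therefore the construction of Theorem~\ref{th.Blasch.dyn} either fails to apply to $B$ or produces an object that is not a Schottky-type geometric extension in the sense used in the paper, so at least one of the four required properties (geometric, semigroup homomorphism, conformally natural, degree-preserving) must break.

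For the converse (sufficiency) direction, assume $S$ avoids the bi-orbit of $z^n$. Then every $B \in S$ satisfies $|CV(B)| \geq 3$ (by the converse of the computation above: non-coincident zeros inside $\Delta$ produce distinct critical values in $\Delta$ plus their $\mathbb{S}^1$-symmetric partners in $\Delta^*$), hence $S_2(B)$ is hyperbolic and Theorem~\ref{th.Blasch.dyn} supplies an extension $Ext(B)$ which is geometric, conformally natural with respect to the subgroup of $PSL(2,\mathbb{C})$ preserving $\mathbb{S}^1$, and degree-preserving via the index identity $[\Gamma_2 : \alpha \Gamma_1 \alpha^{-1}] = \deg B$. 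To obtain the semigroup homomorphism property I would take $B_1, B_2 \in S$ with $B_1 \circ B_2 \in S$ and apply Corollary~\ref{cor.10} with Proposition~\ref{prop.11}: these yield compatible Fuchsian uniformizations on a common copy of $\Delta$ together with M\"obius lifts $\alpha(B_i)$ satisfying $\alpha(B_1 \circ B_2) = \alpha(B_1) \circ \alpha(B_2)$, and this identity descends through the Poincar\'e extension to give $Ext(B_1 \circ B_2) = Ext(B_1) \circ Ext(B_2)$ on $\mathbb{H}^3$.

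The main obstacle I expect is the coherent coordination of Fuchsian uniformizations across the whole semigroup. Theorem~\ref{thm.Blasc.Scho} applied to each $B$ separately yields only isolated groups, so one must either pick a ``universal'' Fuchsian group simultaneously uniformizing every $S_2(B)$ for $B \in S$, or verify that compositions of the M\"obius lifts are independent of the chosen uniformization modulo conjugation by $Aut(\Delta)$. The hypothesis that $S$ avoids the $z^n$-bi-orbit is precisely what keeps every surface $S_2(B)$ hyperbolic, enabling Bers' simultaneous uniformization theorem to produce a coherent family of groups; the conformal naturality established in Theorem~\ref{th.Blasch.dyn} then absorbs the residual ambiguity from the choice of representatives.
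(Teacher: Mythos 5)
Your proposal misidentifies the content of the theorem, and both directions have genuine gaps. The word ``conformally natural'' here means with respect to the full group $PSL(2,\C)$ (Theorem~\ref{th.Blasch.dyn} already gives naturality with respect to the circle-preserving subgroup unconditionally, so under your reading the hypothesis on $z^n$ would be vacuous). The paper's proof is entirely about the obstruction coming from Möbius self-relations: if full conformal naturality fails, there exist $B_1,B_2\in S$ and $g_1,g_2\in PSL(2,\C)$, not preserving $\mathbb{S}^1$, with $B_1\circ g_1=g_2\circ B_2$. One then studies the circles $C_1=g_1(\mathbb{S}^1)$ and $C_2=g_2(\mathbb{S}^1)$ with $B_1^{-1}(C_2)=C_1$: they cannot meet $\mathbb{S}^1$ (a point of $C_1\cap\mathbb{S}^1$ would force all its $B_1$-preimages to be a single critical point on $\mathbb{S}^1$, impossible for a Blaschke map), so $C_1$ and $\mathbb{S}^1$ bound an annulus, and the reflection principle forces $B_1$ to have a critical point of multiplicity $d-1$ in $\Delta$, i.e.\ $B_1$ or $1/B_1$ lies in the $Aut(\Delta)$-bi-orbit of $z^n$. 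None of this appears in your argument; your sufficiency direction establishes only circle-preserving naturality, which holds regardless of the hypothesis, and therefore does not prove the implication you need.

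Your necessity direction also rests on a false mechanism. For $B=z^n$ the construction of Theorem~\ref{thm.Blasc.Scho} does not break down: the simultaneous uniformization is applied to the symmetric halves $\bar\Delta\cap S_2$ and $\bar\Delta^*\cap S_2$, which are punctured disks (hyperbolic), and the resulting group is an elementary parabolic cyclic group --- a degenerate Schottky-type group for which the extension still exists, is geometric and preserves degree. What actually fails is equivariance under the non-circle-preserving relations $z^n\circ(\lambda z)=(\lambda^n z)\circ z^n$ with $|\lambda|\neq 1$, which is exactly the paper's closing remark. Separately, your claimed dichotomy ``$B$ in the bi-orbit of $z^n$ iff $|CV(B)|=2$'' is false: a higher-degree Blaschke product can have several distinct critical points in $\Delta$ sharing a single critical value, giving $|CV(B)|=2$ without $B$ being Möbius-equivalent to $z^n$. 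So even as a criterion for when the surfaces are hyperbolic, the condition in the theorem is not the one you are using.
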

\begin{proof}

If the extension is not conformally natural with respect to $PSL(2,\C)$ then 
there are two elements $B_1$ and $B_2$ in $S$ with two maps $g_1$ and
$g_2$ in $PSL(2,\C)$ such that $B_1\circ g_1=g_2\circ B_2$. Then by Theorem
\ref{th.Blasch.dyn}, the maps $g_1$ and $g_2$ cannot leave the unit circle 
invariant. 
Then there two circles $C_1=g_1(\mathbb{S}^1)$ and $C_2=g_2(\mathbb{S}^1)$
with $B_1^{-1}(C_2)=C_1.$ Let us show that the circles $C_i$ do not intersect
$\mathbb{S}^1.$ Assume that there is $x$ in $ C_1\cap \mathbb{S}^1$ then 
$C_2$ intersects $\mathbb{S}^1$ in all the preimages of $x$ with respect to
$B_1.$  But this is possible only if the preimage of $x$ under $B_1$ is a
single critical point, but a Blaschke map cannot have critical points on
$\mathbb{S}^1$. Therefore, $C_1$ and $C_2$ cannot intersect $\mathbb{S}^1.$
Then $C_1$ and $\mathbb{S}^1$ bound an annulus. By the reflection principle, 
we have that $B_1$ has in the unit disk a critical point of multiplicity $d-1.$
Hence either $B_1$ or $1/B_1$ belongs to the bi-orbit of $z^n$ with respect of 
$Aut(\Delta)$, but  the extension in Theorem \ref{thm.Blasc.Scho} is
compatible with the map $z\mapsto 1/z$. Hence the conclusion of the theorem
holds. Reciprocally, the extension of Theorem \ref{thm.Blasc.Scho} is not
conformally natural on the map $z^n$.

\end{proof}

\begin{corollary}
 Assume $B$ is a Blaschke map of the form $B=g_1 \circ z^d \circ g_2$
such that 
$g_2(z)\neq e^{i\alpha}\circ g_1^{-1}(z)$, then the extensions constructed on 
Theorem \ref{th.Blasch.dyn} of 
$\langle B^n\rangle$ are conformally natural whenever $n\geq 2$.
\end{corollary}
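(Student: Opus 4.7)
The plan is to apply Theorem~\ref{semigroup.Blasch} to the semigroup $S=\langle B^n\rangle=\{B^{nk}:k\ge 1\}$. That theorem says the extension is conformally natural precisely when $S$ meets no $Aut(\Delta)$-bi-orbit of a power map $z^m$; since $B^{nk}$ has degree $d^{nk}$, the only bi-orbit that could contain it is that of $z^{d^{nk}}$. So I would reduce the corollary to the following claim: for every $k\ge 1$, the iterate $B^{nk}$ is not of the form $h_1\circ z^{d^{nk}}\circ h_2$ with $h_1,h_2\in Aut(\Delta)$.

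A degree-$m$ Blaschke map lies in the bi-orbit of $z^m$ if and only if it possesses a unique critical point in $\Delta$, which must then carry the full critical multiplicity $m-1$ in $\Delta$ granted by Riemann--Hurwitz. (One direction is clear; the other uses the Blaschke symmetry to produce a second totally ramified point $1/\bar c$ in $\Delta^*$ and then M\"obius-conjugates $c,1/\bar c$ to $0,\infty$, after which the resulting rational map must be $\lambda z^m$ up to post-composition with an element of $Aut(\Delta)$.) Thus the task becomes counting distinct critical points of $B^{nk}$ in $\Delta$.

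For $B=g_1\circ z^d\circ g_2$ the unique $\Delta$-critical point is $c_0:=g_2^{-1}(0)$, where $B$ is fully ramified (local degree $d$). The chain rule identifies
\[
\mathrm{Crit}(B^{nk})\cap\Delta=\bigcup_{j=0}^{nk-1}B^{-j}(c_0)\cap\Delta.
\]
Since $n\ge 2$, this union contains $c_0$ together with the full preimage $B^{-1}(c_0)\cap\Delta$, which is a divisor of total degree $d$ on $\Delta$. For the union to collapse to the singleton $\{c_0\}$ one needs $B^{-1}(c_0)\cap\Delta=\{c_0\}$, and in particular $B(c_0)=c_0$.

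Finally, $B(c_0)=g_1(g_2(c_0)^d)=g_1(0)$, so $B(c_0)=c_0=g_2^{-1}(0)$ rewrites as $(g_2\circ g_1)(0)=0$, forcing $g_2\circ g_1$ to fix $0$ in $Aut(\Delta)$ and hence to be a rotation $e^{i\alpha}$; this gives $g_2=e^{i\alpha}\circ g_1^{-1}$, contradicting the hypothesis. The main technical point is really just the chain-rule bookkeeping for $\mathrm{Crit}(B^{nk})\cap\Delta$ together with the geometric characterization of the bi-orbit; the hypothesis $n\ge 2$ enters at exactly the step where $B^{-1}(c_0)$ is forced into the critical set, since for $n=1$ the element $B$ itself lies in the bi-orbit of $z^d$ and Theorem~\ref{semigroup.Blasch} would already fail.
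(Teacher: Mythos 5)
Your proposal is correct and follows essentially the same route as the paper: reduce via Theorem~\ref{semigroup.Blasch} to excluding that an iterate of $B$ lies in the $Aut(\Delta)$-bi-orbit of a power map, deduce that the critical point $c_0=g_2^{-1}(0)$ would then be fixed, and compute $B(c_0)=g_1(0)$ to force $g_2\circ g_1(0)=0$ and hence $g_2=e^{i\alpha}\circ g_1^{-1}$, contradicting the hypothesis. You merely supply details the paper leaves implicit (the chain-rule bookkeeping for $\mathrm{Crit}(B^{nk})\cap\Delta$ and the verification for all $k\ge 1$ rather than just $B^n$), which is a welcome tightening but not a different argument.
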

\begin{proof}
 If for $n\geq 2$, $B^n$ does not satisfies the conditions of Theorem
\ref{semigroup.Blasch}, then $B^n$ should be in the bi-orbit of $z^{d^n}$.
Hence $B^n$ has one critical point and one critical value, this implies that
the critical point $x$ of $B$ is fixed. Hence $g_1\circ z^d \circ g_2(x)=x$
then $g_1(0)=g_2^{-1}(0)$, which implies  $g_2\circ g_1(0)=0$ and
$g_2=e^{i\alpha}\circ g_1^{-1}(z)$.
\end{proof}

\subsection{Geometric extensions in Hurwitz spaces}

Let us recall that a branched covering $f:\hat{\C} \rightarrow
\hat{\C}$ of degree $d$ is in general position,  if the number of critical 
points is the same than the number of critical values and equal to
$2d-2$.
According to \cite{SK} a theorem of Luroth and Clebsch states that:

\begin{lemma}\label{Lem.Luroth.Clebsch}Any two branched coverings 
$f_{i}:\hat{\C} \rightarrow
\hat{\C}$ of the same degree in general position are Hurwitz equivalent.
\end{lemma}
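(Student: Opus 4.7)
The plan is to reduce Hurwitz equivalence to a combinatorial statement about monodromy representations, and then realize the resulting combinatorial equivalence by quasiconformal homeomorphisms. First I would normalize by choosing a quasiconformal (indeed smooth) homeomorphism $\phi_0$ of $\hat{\C}$ with $\phi_0(CV(f_1))=CV(f_2)$; such a $\phi_0$ exists because both critical value sets are finite of the same cardinality $2d-2$. Replacing $f_1$ by $\phi_0\circ f_1$, which is trivially Hurwitz equivalent to $f_1$, I may assume that both maps share a common critical value set $V\subset \hat{\C}$.

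Then the unbranched restrictions $f_i:\hat{\C}\setminus f_i^{-1}(V)\to \hat{\C}\setminus V$ are coverings of degree $d$, classified up to base-preserving equivalence by the $S_d$-conjugacy class of their monodromy representations $\rho_i:\pi_1(\hat{\C}\setminus V,\ast)\to S_d$. Choose a geometric system of loops $\gamma_1,\ldots,\gamma_{2d-2}$ around the points of $V$ with relation $\gamma_1\cdots\gamma_{2d-2}=1$. The general position hypothesis (all critical points simple) means each $\rho_i(\gamma_j)$ is a transposition, so each covering produces a tuple $(\sigma_1^{(i)},\ldots,\sigma_{2d-2}^{(i)})$ of transpositions in $S_d$ whose product is the identity and which generates a transitive subgroup of $S_d$ (the latter because $\hat{\C}$ is connected).

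The combinatorial core of the theorem, classically due to Clebsch and L\"uroth, is that the Hurwitz braid group $B_{2d-2}$ acting on such tuples by the braid moves $(\ldots,\sigma_j,\sigma_{j+1},\ldots)\mapsto(\ldots,\sigma_j\sigma_{j+1}\sigma_j^{-1},\sigma_j,\ldots)$, together with simultaneous conjugation by $S_d$, acts transitively on the set of admissible tuples. I would invoke this as the known statement cited in \cite{SK}; this is where the substantive content lies and would be the main obstacle if not quoted, typically handled by induction on $d$ after using braid moves to reduce a tuple to a canonical normal form. Each Hurwitz generator is realized geometrically by a quasiconformal half-twist of $\hat{\C}$ swapping two adjacent points of $V$ and fixing the rest, so any braid word carrying the tuple for $\rho_1$ to the tuple for $\rho_2$ is implemented by a quasiconformal homeomorphism $\phi$ of $\hat{\C}$ permuting $V$ such that $\phi_\ast\rho_1$ is $S_d$-conjugate to $\rho_2$.

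Consequently, the coverings $\phi\circ f_1$ and $f_2$ are equivalent over the base $\hat{\C}\setminus V$, yielding a homeomorphism $\psi:\hat{\C}\setminus f_1^{-1}(V)\to \hat{\C}\setminus f_2^{-1}(V)$ with $\phi\circ f_1=f_2\circ \psi$. Near each preimage of $V$ the maps $f_1$ and $f_2$ are locally modeled on $z\mapsto z^2$ at simple critical points and on local biholomorphisms at regular points, so $\psi$ extends continuously across $f_1^{-1}(V)$; since away from the finite set $f_1^{-1}(V)$ the extension is quasiconformal with the same bounded dilatation and isolated singularities are removable for quasiconformality, $\psi$ is a global quasiconformal homeomorphism of $\hat{\C}$. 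The pair $(\phi,\psi)$ then witnesses the Hurwitz equivalence of $f_1$ and $f_2$.
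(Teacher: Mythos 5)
The paper does not actually prove this lemma: it is stated as a classical theorem of L\"uroth and Clebsch and simply quoted from the reference \cite{SK}, so there is no in-paper argument to compare yours against step by step. Your proposal is the standard proof of that classical result and is essentially correct as an outline: normalize so both maps share the critical value set $V$, pass to the monodromy representations $\pi_1(\hat{\C}\setminus V)\to S_d$, note that general position forces each local monodromy to be a transposition and connectedness of $\hat{\C}$ forces transitivity, and then invoke transitivity of the Hurwitz braid action (together with simultaneous $S_d$-conjugation) on tuples of $2d-2$ transpositions with trivial product, realizing the braid generators by quasiconformal half-twists of the base. You are candid that the combinatorial core --- the Clebsch transitivity statement --- is itself being quoted rather than proved; that is precisely the content the paper is citing, so on the one genuinely nontrivial point your proof relies on the literature exactly as the paper does. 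What your write-up adds beyond the paper is the routine but necessary translation between that combinatorial statement and Hurwitz equivalence via covering-space theory, including the continuous extension of $\psi$ across $f_1^{-1}(V)$ and its quasiconformality by removability of isolated singularities. One small caveat: for $\psi$ to be literally quasiconformal you implicitly assume the branched coverings are locally modeled on $z\mapsto z^k$ in quasiconformal charts, which holds for the rational and Blaschke maps to which the paper applies the lemma; for purely topological branched coverings one would first obtain a topological equivalence and then adjust $\psi$ by isotopy to make it quasiconformal.
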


The existence of a Schottky type extension is a property of the whole Hurwitz 
space as we show in the following.

\begin{lemma}\label{lem.branch.Hurw} Let $B$ be a map with a Schottky type 
geometric extension and $R$ a 
rational map in $H(B)$, then $R$ has also an extension of Schottky
type.

\end{lemma}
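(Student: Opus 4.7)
The plan is to transport the Schottky-type uniformization of $B$ to one of $R$ via the quasiconformal Hurwitz equivalence, using the Measurable Riemann Mapping Theorem. Let $\phi,\psi$ be quasiconformal homeomorphisms of $\bar{\C}$ with $\phi\circ B = R\circ \psi$. Since quasiconformal maps send critical values to critical values, $\phi(CV(B))=CV(R)$, and writing $S_2=\bar{\C}\setminus CV(B)$, $S_1=B^{-1}(S_2)$, $S'_2=\phi(S_2)=\bar{\C}\setminus CV(R)$ and $S'_1=\psi(S_1)=R^{-1}(S'_2)$, we obtain holomorphic coverings $B\colon S_1\to S_2$ and $R\colon S'_1\to S'_2$ together with quasiconformal identifications $\phi\colon S_2\to S'_2$ and $\psi\colon S_1\to S'_1$. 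Because $B$ and $R$ are holomorphic, the equality $\phi\circ B=R\circ\psi$ yields the pullback identity $B^*\mu_\phi=\mu_\psi$ on Beltrami coefficients.

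Let $(\Gamma_1,\Gamma_2,\alpha)$ be the Schottky-type uniformization of $B$ provided by the hypothesis, with $\Omega(\Gamma_i)/\Gamma_i=S_i$, Cantor limit sets, and a M\"obius map $\alpha\colon \Omega(\Gamma_1)\to\Omega(\Gamma_2)$ lifting $B$ through the orbit projections $\pi_i$. Pull back the Beltrami coefficients to the uniformizing domains by setting $\tilde\mu_2:=\pi_2^*\mu_\phi$ on $\Omega(\Gamma_2)$ and $\tilde\mu_1:=\pi_1^*\mu_\psi$ on $\Omega(\Gamma_1)$, extended by zero across the limit sets (which have measure zero). By construction each $\tilde\mu_i$ is $\Gamma_i$-invariant, so solving the Beltrami equation on $\bar{\C}$ produces quasiconformal homeomorphisms $w_i$ for which $\Gamma_i':=w_i\,\Gamma_i\,w_i^{-1}$ is a Kleinian group with limit set $w_i(\Lambda(\Gamma_i))$, again a Cantor set; hence $\Gamma_i'$ is of Schottky type. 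Up to post-composition with a conformal automorphism, $w_2$ descends to $\phi$ and $w_1$ descends to $\psi$, so $\Omega(\Gamma_i')/\Gamma_i'=S'_i$.

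To complete the uniformization of $R$, set $\alpha':=w_2\circ\alpha\circ w_1^{-1}$. Using $\pi_2\circ\alpha=B\circ\pi_1$ and that $\alpha$ is M\"obius, one computes $\alpha^*\tilde\mu_2=(B\circ\pi_1)^*\mu_\phi=\pi_1^*(B^*\mu_\phi)=\pi_1^*\mu_\psi=\tilde\mu_1$, so that the Beltrami coefficient of $\alpha'$ vanishes and $\alpha'$ is a M\"obius map. The triple $(\Gamma_1',\Gamma_2',\alpha')$ therefore realizes diagram~\eqref{diag.basic} for $R$ and, after passing to the associated M\"obius $3$-manifolds $M(\Gamma_i')$, yields a Schottky-type Poincar\'e extension of $R$. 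The main delicate point will be Step~2: checking that each $w_i$ is genuinely $\Gamma_i$-equivariant (so that the descent to $S'_i$ coincides with $\phi,\psi$ up to a conformal change of coordinates), and verifying that $[\Gamma_2':\alpha'\Gamma_1'(\alpha')^{-1}]$ equals $\deg(R)$; both follow from the $\Gamma_i$-invariance of the $\tilde\mu_i$ together with $\deg(R)=\deg(B)=[\Gamma_2:\alpha\Gamma_1\alpha^{-1}]$.
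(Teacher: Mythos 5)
Your construction of the deformed groups is the same mechanism the paper uses: transport the Schottky-type uniformization of $B$ through the quasiconformal Hurwitz equivalence by solving the Beltrami equation for the lifted, $\Gamma_i$-invariant coefficients, and observe that the conjugated lift $\alpha'=w_2\circ\alpha\circ w_1^{-1}$ has vanishing Beltrami coefficient on $\Omega(\Gamma_1')$ and hence (the limit set being a removable Cantor set) is M\"obius. Your explicit computation $\alpha^*\tilde\mu_2=\pi_1^*(B^*\mu_\phi)=\tilde\mu_1$ is in fact a cleaner justification of the step the paper states only implicitly when it asserts that $\beta=\tilde g^{-1}\circ\alpha\circ\tilde f$ is M\"obius. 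The verification that $[\Gamma_2':\alpha'\Gamma_1'(\alpha')^{-1}]=\deg(R)$ and that $\Omega(\Gamma_i')/\Gamma_i'\cong S_i'$ also goes through as you indicate.

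There is, however, a genuine gap: in this paper ``an extension of Schottky type'' is by definition a \emph{geometric} extension whose uniformizing groups are of Schottky type, and your argument stops at producing a Poincar\'e extension on the abstract M\"obius manifolds $M(\Gamma_i')$. You never address conditions (1) and (2) of geometricity, i.e.\ that $M(\Gamma_1')$ and $M(\Gamma_2')$ can be embedded in $\bar{\mathbb{B}}^3$ as complements of finitely many quasi-geodesics, with a continuous extension respecting complementary quasi-geodesics. This does not follow from the two-dimensional deformation alone: one needs to extend the quasiconformal conjugacies to the ball compatibly with the group actions, so as to push forward the given embeddings of $M(\Gamma_1)$ and $M(\Gamma_2)$. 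The paper does exactly this using the Douady--Earle conformally natural extension to $\mathbb{B}^3$ (\cite[Th.~5]{DouadEarl}), which is only invoked for maps of small dilatation, and then removes the smallness hypothesis by a connectivity argument: $H(B)$ is path-connected, so a Hurwitz equivalence of large dilatation can be factored into a chain of small-dilatation ones. Your proof needs this (or an equivalent equivariant three-dimensional extension step, e.g.\ Tukia--Reimann) to deliver the conclusion as the paper means it; without it you have only shown that $R$ admits a Poincar\'e extension with Schottky-type uniformizing groups, not that this extension is geometric.
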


\begin{proof} Let $\Gamma_1$ and $\Gamma_2$ be the uniformizing groups for $B$. 
Let $\alpha : \C \rightarrow \C$ be the M\"obius map extending $B$. By 
definition of $H(B)$, there are two quasiconformal maps $f,g$ on the Riemann 
sphere such that $f\circ B=R\circ g.$ Solving the Beltrami equation,  we get 
quasiconformal extensions $\tilde{f}:\C\rightarrow \C$ and 
$\tilde{g}:\C\rightarrow \C$ of $f$ and $g$ respectively, such that 
$\beta=\tilde{g}^{-1}\circ \alpha \circ \tilde{f}$ is a M\"obius  map extending 
 $R$.  Let us assume first that $\tilde{f}$ and $\tilde{g}$ have small 
distortion, then by a theorem in \cite[Th. 5]{DouadEarl} each map, $\tilde{f}$ 
and $\tilde{g}$, admits a homeomorphic extension, say $\hat{f}$ and $\hat{g}$, 
to $\mathbb{B}^3$ compatible with the 
groups $\Gamma_1$ and $\Gamma_2$. Hence we obtain two 
Kleinian groups $\tilde{\Gamma}_1=\hat{f}\circ \Gamma_1 \circ \hat{f}^{-1}$ 
and $\tilde{\Gamma}_2=\hat{g}\circ \Gamma_2\circ \hat{g}^{-1}$ with manifolds 
$M(\tilde{\Gamma}_1)$ and $M(\tilde{\Gamma}_2)$ that extend the map $R$.
To see that this extension is geometric we have to embed each manifold  $M(\tilde{\Gamma}_1)$ 
and $M(\tilde{\Gamma}_2)$ into $\mathbb{B}^3$, such that the image of these 
embeddings are the complement of a finite set of quasigeodesics.

To do so, we use the geometric extension of $B$. We know that
 $M(\Gamma_1)$ and $M(\Gamma_2)$ are already 
realized as submanifolds of $\mathbb{B}^3$, hence by conjugating
  $M(\Gamma_1)$ by $\hat{f}$ and $M(\Gamma_2)$ by $\hat{g}$,
 we obtain the desired embeddings of $M(\tilde{\Gamma}_1)$ and 
$M(\tilde{\Gamma}_2)$ into $\mathbb{B}^3$.

To complete the proof we note that $H(B)$ is connected, so for maps $f$ and $g$ 
with big distortion, we can use a path on $H(B)$ and extend on small 
distortion changes.

\end{proof}

Now let us show that there are Schottky type extensions for a large set of 
rational maps.

\begin{theorem}There is an open and everywhere dense subset in $Rat_{d}(\C)$ 
which has a geometric extension of Schottky type, of the same degree.
\end{theorem}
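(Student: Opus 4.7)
The plan is to combine three ingredients developed earlier in the section: the theorem of Luroth--Clebsch (Lemma \ref{Lem.Luroth.Clebsch}), the existence of Schottky type geometric extensions for Blaschke maps (Theorem \ref{th.Blasch.dyn}), and the Hurwitz transport result (Lemma \ref{lem.branch.Hurw}). The target open dense set will be the set $U\subset Rat_d(\C)$ of rational maps in general position, i.e.\ those having exactly $2d-2$ distinct critical points and $2d-2$ distinct critical values.

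The first step is to verify that $U$ is open and everywhere dense in $Rat_d(\C)$. Openness is immediate since having $2d-2$ distinct critical values is an open condition, and density follows from the fact that the failure locus is cut out by the vanishing of the discriminant of the critical set and the resultant-type conditions among critical values, each of which is algebraic. The second step is to exhibit a single Blaschke map $B_0$ of degree $d$ lying in $U$. The family of Blaschke maps of degree $d$ is parametrized by $d$ zeros in $\Delta$ and a unimodular constant; a generic choice of the zeros gives distinct critical points with distinct images on both sides of $\mathbb{S}^1$, so such a $B_0\in U$ exists.

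By Theorem \ref{th.Blasch.dyn}(i), the map $B_0$ admits a geometric extension of Schottky type, and Theorem \ref{thm.Blasc.Scho} guarantees that this extension satisfies $[\Gamma_2:\alpha\Gamma_1\alpha^{-1}]=\deg(B_0)=d$, so it is of the same degree. Now Lemma \ref{Lem.Luroth.Clebsch} implies that every map in $U$ is Hurwitz equivalent to $B_0$, hence $U\subset H(B_0)$. Applying Lemma \ref{lem.branch.Hurw} to every $R\in U$ produces a Schottky type geometric extension for $R$; the construction there conjugates the uniformizing Kleinian groups of $B_0$ by the Douady--Earle extensions of the quasiconformal conjugacies realizing the Hurwitz equivalence, and the index of subgroups, hence the degree, is preserved by conjugation. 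This gives a geometric Schottky extension of the same degree for every $R\in U$, proving the theorem.

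The main technical obstacle is really localized inside Lemma \ref{lem.branch.Hurw}, where one must argue that the embeddings of the manifolds $M(\tilde\Gamma_i)$ in $\B^3$ continue to have complements which are unions of quasi-geodesics, when the quasiconformal distortion of $f$ and $g$ is large; the path-connectedness of $H(B_0)$ together with the small-distortion case is what handles this, so no additional argument is needed here. The one point requiring a brief check is that the Blaschke map $B_0$ can indeed be chosen in general position; beyond that, the theorem reduces to a bookkeeping of the results already established.
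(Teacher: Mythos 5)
Your proposal is correct and follows essentially the same route as the paper: the paper's proof also combines Lemma \ref{Lem.Luroth.Clebsch}, Theorem \ref{thm.Blasc.Scho}, and Lemma \ref{lem.branch.Hurw}, observing that the union of Hurwitz spaces of degree-$d$ Blaschke maps is open and dense. Your version merely makes explicit the details the paper leaves implicit (identifying the dense set as the general-position locus and checking that a general-position Blaschke map exists), which is a helpful elaboration but not a different argument.
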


\begin{proof} By Lemma \ref{Lem.Luroth.Clebsch}, the union of all the Hurwitz 
spaces of all Blaschke maps of fixed 
degree is open and everywhere dense in $Rat_d(\C)$. Hence
Theorem~\ref{thm.Blasc.Scho} and Lemma~\ref{lem.branch.Hurw} imply this
theorem.
\end{proof}

It follows that structurally stable rational maps have a Schottky type
extension. We believe that any rational map has a geometric extension such that
the respective manifold belongs to the closure of the Schottky space. Since
Hurwitz space of any  branched covering of finite degree of the sphere contains
a rational map, we conjecture that the closure of the Schottky space of given
degree $d$, contains all realizable Hurwitz combinatorics.

\subsection{Extensions of exceptional maps}

Now we discuss the situation when, for a given rational map $R:S_1\rightarrow 
S_2$, the orbifolds
$S_1$ and $S_2$ are equal, so the map $R$ is an orbifold endomorphism. The 
class of maps
$R$ are called exceptional, the reader will find a more detailed discussion of
these maps in \cite{MilLat}.
In particular, the Euler characteristic $\chi(S_1)$ is zero. Hence
$S_1$ is a parabolic orbifold, this only occurs when the map $R$ is
conjugate to either a Tchebichev map, a Latt\'es map or $z\mapsto z^n$.

\begin{theorem}
 Let $G$ be a semigroup of rational maps which are self-coverings of a 
parabolic orbifold $S$ supported on the Riemann sphere. Then
there exist a geometric extension satisfying the following conditions
\begin{itemize}
 \item For every $g\in G$, the extension $\hat{g}$ has the same degree as $g.$
\item Each extension is geometric.
\item The set of extensions $\hat{G}$ is a semigroup, and the extension map is
a homomorphism from $G$ to $\hat{G}$.
\end{itemize}

\end{theorem}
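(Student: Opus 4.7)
The plan is to use the classification of parabolic orbifolds supported on the Riemann sphere: such $S$ is one of the quotients $\mathbb{C}^*$, the $(2,2,\infty)$ orbifold, the $(2,4,4)$, $(3,3,3)$, $(2,3,6)$, or $(2,2,2,2)$ orbifold, and accordingly every $g\in G$ is, up to M\"obius change of coordinates, either a power map, a Chebyshev map, or a Latt\`es map. In each case $S$ admits a universal covering $\pi:\mathbb{C}\to S$ whose deck group $\Gamma$ is a discrete subgroup of the group of complex affine transformations $z\mapsto az+b$. Since a complex affine map is a M\"obius transformation fixing $\infty$, each element of $\Gamma$ extends canonically to a M\"obius automorphism of $\bar{\mathbb{C}}$, and hence by the classical Poincar\'e extension to an isometry of $\mathbb{H}^3$. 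So $\Gamma$ acts as a Kleinian group on $\bar{\mathbb{H}}^3\cup \mathbb{C}$ (with limit set $\{\infty\}$), and $M:=(\mathbb{H}^3\cup\mathbb{C})/\Gamma$ is the associated $3$-manifold whose conformal boundary is $S$.

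Next, I would lift each $g\in G$. Because $g:S\to S$ is a covering of a parabolic orbifold by itself, the Monodromy Theorem gives an affine lift $\tilde g(z)=a_g z+b_g$ satisfying $\pi\circ\tilde g=g\circ\pi$, and the lift is unique up to composition with an element of $\Gamma$. Moreover $\tilde g\,\Gamma\,\tilde g^{-1}\subset \Gamma$, so $\tilde g$ descends to an endomorphism of $M$ of degree $[\Gamma:\tilde g\Gamma\tilde g^{-1}]=\deg(g)$; extending $\tilde g$ by the classical Poincar\'e extension and projecting, define
\[
\mathrm{Ext}(g)\;=\;\hat g:M\longrightarrow M.
\]
The homomorphism property is now immediate: given $g_1,g_2\in G$, the affine map $\tilde g_1\circ\tilde g_2$ is a lift of $g_1\circ g_2$, and the Poincar\'e extension is a homomorphism on the affine M\"obius group, so $\widehat{g_1\circ g_2}=\hat g_1\circ\hat g_2$. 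Replacing the individual choices of lift by a fixed coherent system (normalizing by prescribed fixed points, exactly as was done in Corollary~\ref{cor.Bla.dyn}) eliminates the ambiguity by an element of $\Gamma$ and makes this composition rule well-defined on the quotients.

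Finally, to check the geometric condition I would describe $M$ explicitly in each case. In every parabolic model the group $\Gamma$ is generated by translations (and in two of the Latt\`es models by a single finite-order rotation), hence $\Gamma$ is virtually abelian with limit set $\{\infty\}$. A standard fundamental domain argument embeds $M\cup\partial M$ as the complement in $\bar{\mathbb{H}}^3$ of a finite pencil of quasi-geodesic rays meeting at the image of the parabolic fixed point (for $z\mapsto z^n$ the complement of a single vertical geodesic; for Chebyshev and Latt\`es maps the complement of a controlled finite family of rays from a common cusp vertex). Choosing $\Phi_1=\Phi_2=\Phi$ to be this embedding, the quasiconformal extension of any affine lift $\tilde g$ sends complementary quasi-geodesics to complementary quasi-geodesics, verifying both items in the definition of geometric extension and showing $M_1=M_2$, so that the extensions form a genuine semigroup of endomorphisms of $\bar{\mathbb{H}}^3$.

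The main obstacle I anticipate is the last paragraph: checking by hand that for each of the three exceptional families the quotient manifold $M$ really embeds into $\bar{\mathbb{H}}^3$ as the complement of a finite union of quasi-geodesic rays with a common endpoint, and that an arbitrary affine lift extends to a self-map of this complement preserving the combinatorial pattern of rays. The $z\mapsto z^n$ case is routine; the torus/Latt\`es cases require a careful bookkeeping of the cusp geometry, since the deck group is rank two and the cusp region quotient is naturally a Euclidean torus cross $\mathbb{R}_+$ which one must identify with the complement of a bouquet of rays based at a single point of $\partial\bar{\mathbb{H}}^3$.
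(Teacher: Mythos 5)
Your proposal is correct and follows essentially the same route as the paper: uniformize the parabolic orbifold by a discrete group of affine maps (a lattice, a lattice extended by the involution $z\mapsto -z$, or a rank-one group), lift each $g\in G$ to an affine endomorphism of the lattice, and apply the classical Poincar\'e extension on $\mathbb{H}^3/\Gamma$. The paper's proof is far terser (it asserts the simultaneous Poincar\'e extensions satisfy the properties without the degree, well-definedness, or geometric verifications you spell out), so your additional detail is a faithful elaboration rather than a different argument.
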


\begin{proof}
The proof exploits the fact that the elements in $G$ have known 
uniformizations.  Consider lattice
$$L_\tau:=\langle z\mapsto z+1, z\mapsto z+\tau: \Im \tau>0\rangle$$ in the 
Latt\'es case, and the lattice 
$$L_0:=\langle z\mapsto z+1, z\mapsto z+\tau: \Im \tau>0\rangle$$ in the case
of $z^n$ and Tchebichev. 
Let $\sigma$ be the involution $z\mapsto -z $. If $\Gamma_\tau$ is 
the group generated by $L_\tau$ and the involution, then $S$ is equivalent to
$\C/\Gamma_\tau$ or $\C/L_0$. Now, we have three groups 
$\Gamma_\tau$, $\Gamma_0$ and the group $L_0$.  In terms of the lattice 
$L_\tau$, $G$ is a semigroup of affine endomorphisms of $L_\tau$. In each case, 
$G$ has a simultaneous Poincar\'e extensions on the orbifolds 
$\mathbb{H}^3/\Gamma$ where $\Gamma$ is one of the three groups mentioned. 
These Poincar\'e extensions  satisfy  the properties of the Theorem.

\end{proof}

As an example we give a detailed description of a three dimensional orbifold
supporting the Poincar\'e extension of integral Latt\'es maps.

In other words, this is the case when $R$ is a holomorphic endormorphism of the
orbifold of type $(\bar{\C},2,2,2,2)$.

 Let us consider the filled torus $T$ in $\C^2$, given in coordinates $(z_1,
z_2)$ by
$|z_1|\leq 1$ and $|z_2|=1$. This space is uniformized by the Poincar\'e
extension of a lattice $L_\tau$ in
$\mathbb{H}^3$ parallel to the plane, for a suitable choice of $\tau$.

Let $I$ be the involution map $$(z_1,z_2)\mapsto (\bar{z}_1,
\bar{z}_2).$$

The map $I$ acts on the filled torus $T$ as an involution. The quotient $T/I$
gives an orbifold $O$ supported in $\bar{\mathbb{B}}^3$ with two ramification 
lines.
Let $\pi:T\rightarrow T/I$ be the orbit projection. The core of $T$ is the unit 
circle $C=\{(z_1,z_2):z_1=0\}.$

Let us consider the family of endomorphisms $\Psi(l,m,k)$ of $T$ given by the
formulae $$(z_1,z_2)\mapsto (z_1^l z_2^k, z_2^m).$$ Where $l,m,k$ are 
integers.  In other words, $\Psi(l,m,k)$
contains all the Poincar\'e extensions of the semigroup of integer 
multiplications on
$T\setminus C$. Then this family commutes
with the involution $I$ and generates a semigroup $J$ of endomorphisms of the
orbifold $O$. 
\begin{corollary}
 Let $\hat{R}$ be an element of $J$, then $\pi(C)$ is an interval which is
invariant with respect to $\hat{R}$ and the restriction of $\hat{R}$ on
$\pi(C)$ is topologically conjugate to a Tchebichev polynomial of degree $k$.
Moreover, there exist a continuous projection $h:\partial O \rightarrow \pi(C)$
so that $h\circ
\hat{R}=\hat{R}\circ h.$
\end{corollary}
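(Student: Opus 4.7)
The plan is to analyze the endomorphisms $\Psi(l,m,k)$ separately on the core $C$ and on the full boundary $\partial T$, and then to descend everything through the involution $I$. First I would verify that the core $C=\{(0,z_2):|z_2|=1\}$ is invariant under every $\Psi(l,m,k)$: substituting $z_1=0$ gives $\Psi(l,m,k)(0,z_2)=(0,z_2^m)$, so the restriction to $C$ is the degree-$m$ self-covering $z_2\mapsto z_2^m$ of the unit circle. The involution $I$ restricts to $C$ as $z_2\mapsto \bar z_2$, with exactly the two fixed points $(0,\pm 1)$, so $\pi(C)=C/I$ is the quotient of a circle by a reflection, which is a closed arc with endpoints $\pi(0,\pm 1)$. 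Thus $\pi(C)$ is an interval, and because $\Psi(l,m,k)$ preserves $C$ the induced map $\hat R$ preserves $\pi(C)$.

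To identify the conjugacy class of $\hat R|_{\pi(C)}$, I would parametrize $C$ by $z_2=e^{i\theta}$, which conjugates $z_2\mapsto z_2^m$ to the angle scaling $\theta\mapsto m\theta$, and then pass to the quotient by $\theta\sim-\theta$ via the coordinate $t=\cos\theta\in[-1,1]$. The induced map on $[-1,1]$ is precisely the Tchebichev polynomial $T_m$ characterized by $T_m(\cos\theta)=\cos(m\theta)$, which yields the claimed topological conjugacy of $\hat R|_{\pi(C)}$ with a Tchebichev polynomial of the appropriate degree.

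For the semiconjugacy $h$, I would simply forget the first coordinate: define $\tilde h:\partial T\to C$ by $\tilde h(z_1,z_2)=(0,z_2)$. A direct computation gives
\[
\tilde h\circ\Psi(l,m,k)(z_1,z_2)=(0,z_2^m)=\Psi(l,m,k)|_C\bigl(\tilde h(z_1,z_2)\bigr),
\]
and $\tilde h\circ I(z_1,z_2)=(0,\bar z_2)=I\circ\tilde h(z_1,z_2)$, so $\tilde h$ commutes with $I$ and intertwines $\Psi(l,m,k)$ with its restriction to $C$. Hence $\tilde h$ descends to a continuous projection $h:\partial O\to\pi(C)$ satisfying $h\circ \hat R=\hat R|_{\pi(C)}\circ h$, as required. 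The only delicate point I foresee is continuity of $h$ across the ramification locus of $\partial O$ where $I$ has fixed points, but this is immediate from the explicit formula for $\tilde h$, since the $I$-fixed points of $\partial T$ are sent to $(0,\pm 1)$, which project to the two endpoints of $\pi(C)$.
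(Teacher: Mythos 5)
Your proposal is correct and follows essentially the same route as the paper: the projection $(z_1,z_2)\mapsto(0,z_2)$ commuting with both $\Psi(l,m,k)$ and the involution $I$, the observation that $\Psi(l,m,k)|_C$ is $z_2\mapsto z_2^m$, and the passage to the quotient by $I$ giving the Tchebichev model on an interval; you merely supply the explicit conjugacy $t=\cos\theta$ that the paper leaves implicit. You are also right that the degree is $m$ rather than the $k$ stated in the corollary, which agrees with the paper's own proof.
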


\begin{proof} 
 Note that the projection $P:T\rightarrow C$ given by $(z_1,z_2)\rightarrow
(0,z_2)$ commutes with the maps $\Psi(l,m,k)$ and the involution $I$. Moreover,
the restriction of the map $\Psi(l,m,k)$ on $C$ is the power map $z_2\mapsto
z_2^m$. The action of $\hat{R}$ restricted on $\pi(C)$ is topologically
conjugate the Julia set of a Tchebichev map of degree $m$, hence $\pi(C)$ is 
an interval. Since the projection $P$ commutes with
the action of $I$, descends to a projection $h$ as desired.
\end{proof}

One would expect  that Tchebichev polynomials are obtained by pinching
$T$ onto $C$. The projection $P$ defines a foliation $\partial T$ by circles.
Hence, the projection $h$ defines a foliation $F$ on $\partial O$, all leaves
in $F$ are topological circles with the exception of two leaves homeomorphic
to intervals. In other words, one would think that the foliation $F$ shrinks
to a model of the Tchebichev polynomial. So there would be a
deformation of the foliation in the boundary $\partial O$ that produces a
Tchebichev map. The corollary above suggests an argument to construct such
deformation. So, it is natural to ask: is it true that the closure of the space
of quasiconformal deformations of flexible Latt\'es maps in $\bigcup_{d<deg(R)} 
Rat_d(\C)$ contains Tchebichev maps?  Is it true that any point in the boundary 
of flexible Latt\'es map is rigid? Finally, is it true that any point in the
boundary has degree strictly smaller than degree of the given Latt\'es map? 

\subsection{Non-Galois affine extensions}

The main idea of this paper is to  transform a rational map to a M\"obius 
morphism. On the sections above, we discussed Galois coverings, which is the 
uniformizable situation. In this subsection, let us consider non Galois 
coverings which also transform rational dynamics into M\"obius dynamics.

 Simple examples  of non Galois coverings are given by Poincar\'e functions
 associated to repelling cycles of $R.$ These are functions $f$
satisfying the functional equation $f(\lambda z)=R^n \circ f$ for some $n$ and 
some $\lambda$.

Let us suppose that there exist an extension $\hat{f}$ of $f$ in 
$\mathbb{H}^3$, then
there exist a multivalued map defined $$K_f=\hat{f}\circ \lambda z \circ
\hat{f}^{-1}.$$ When $K_f$ is an ordinary map, we have a dynamical 
extension. If $f$ is a Galois covering then we are in the parabolic situation 
described in the previous section.  In general, is not clear when  $K_f$ is a 
map even in the case $\hat{f}$ is a visual extension of a Poincar\'e function 
$f$. In the case $K_f$ is a map, we call $\hat{K_f}$ a \textit{non-Galois} 
extension of $R$.

Let $B$ be a Blaschke map, and $R$ a  quasiconformal deformation of $B$. Now 
we show that for any Poincar\'e function $f$ of $R$ there exists
non Galois extension.

\begin{theorem}
Let $R$ be quasiconformally conjugate  to a Blaschke map. For every Poincar\'e 
function of $R$ there exist a non Galois extension of $R$.

\end{theorem}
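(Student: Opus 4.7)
The plan is to transport the problem to a Blaschke model via the quasiconformal conjugacy, exploit the dynamical extension from Theorem~\ref{th.Blasch.dyn} to build an extension $\hat{f}_B$ by a functional equation, and pass back to $R$ via the Douady--Earle extension of the conjugacy, in such a way that $K_f$ becomes automatically a single-valued endomorphism of $\mathbb{H}^3$.

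Fix a quasiconformal homeomorphism $\phi$ of $\hat{\mathbb{C}}$ with $R=\phi\circ B\circ\phi^{-1}$ for some Blaschke map $B$, and fix a Poincar\'e function $f$ of $R$ at a repelling $n$-cycle, $f(\lambda z)=R^n\circ f(z)$. Let $p_B=\phi^{-1}(f(0))$ and let $f_B$ be the classical holomorphic linearizer of $B^n$ at $p_B$ with multiplier $\mu=(B^n)'(p_B)$, so $f_B(\mu z)=B^n(f_B(z))$. Both $f$ and $\phi\circ f_B$ are Poincar\'e functions of $R$ at the same $n$-cycle, possibly with different multipliers, and are related by $f=\phi\circ f_B\circ h$ for a quasiconformal map $h:\mathbb{C}\to\mathbb{C}$ intertwining $z\mapsto\lambda z$ with $z\mapsto\mu z$; writing $\hat{\lambda},\hat{\mu}$ for the M\"obius hyperbolic extensions to $\mathbb{H}^3$ fixing $0$ and $\infty$, $h$ has a corresponding quasiconformal extension $\hat{h}$ to $\mathbb{H}^3$ with $\hat{h}\circ\hat{\lambda}=\hat{\mu}\circ\hat{h}$.

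The main step is building $\hat{f}_B$. Apply Theorem~\ref{th.Blasch.dyn} to obtain the dynamical geometric Blaschke extension $\hat{B}:M_B\to M_B$, with $\hat{B}^n=\widehat{B^n}$. Near $0$, the biholomorphism $f_B|_{U_0}$ extends by the classical Poincar\'e extension to a local M\"obius isomorphism $\hat{f}_B^{(0)}$ between attached half-balls in $\mathbb{H}^3$. Since $\hat{B}^n$ acts near $p_B$ in $\mathbb{H}^3$ as a local M\"obius fixing $p_B$ with multiplier $\mu$, the seed satisfies $\hat{f}_B^{(0)}\circ\hat{\mu}=\hat{B}^n\circ\hat{f}_B^{(0)}$ on $\hat{\mu}^{-1}U_0\cap U_0$ provided $U_0$ is chosen with $\hat{\mu}^{-1}U_0\subset U_0$. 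Extend globally by
\[
\hat{f}_B(w)\;=\;\hat{B}^{nk}\bigl(\hat{f}_B^{(0)}(\hat{\mu}^{-k}w)\bigr),
\]
choosing $k=k(w)$ large enough that $\hat{\mu}^{-k}w\in U_0$. The dynamical property of $\hat{B}$ together with the seed equation make the value independent of $k$, giving $\hat{f}_B\circ\hat{\mu}=\hat{B}^n\circ\hat{f}_B$.

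Setting $\hat{f}=\hat{\phi}\circ\hat{f}_B\circ\hat{h}$ for $\hat{\phi}$ a Douady--Earle extension of $\phi$ (as in Lemma~\ref{lem.branch.Hurw}) yields $\hat{f}\circ\hat{\lambda}=\hat{\phi}\circ\hat{B}^n\circ\hat{\phi}^{-1}\circ\hat{f}$, hence
\[
K_f\;=\;\hat{f}\circ\hat{\lambda}\circ\hat{f}^{-1}\;=\;\hat{\phi}\circ\hat{B}^n\circ\hat{\phi}^{-1},
\]
a single-valued endomorphism of $\mathbb{H}^3$ extending $R^n$, which gives the required non-Galois extension. The main obstacle I expect is constructing the intertwining $h$ when $\lambda\neq\mu$: the formal solution $h(z)=e^{d}z^{\log\mu/\log\lambda}$ is multi-valued on $\mathbb{C}^{*}$, and one must define it single-valuedly on a fundamental sector for $z\mapsto\lambda z$ and extend by the functional equation to get a quasiconformal map whose hyperbolic extension is compatible with the construction of $\hat{f}_B$. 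Once this is handled, the single-valuedness of $K_f$ is automatic from the dynamical property of $\hat{B}$.
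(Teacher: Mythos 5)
Your overall strategy (reduce to the Blaschke model, extend the Poincar\'e function there, return to $R$ by quasiconformal conjugacy as in Lemma~\ref{lem.branch.Hurw}) matches the paper's outline, but the central step --- the construction of $\hat{f}_B$ --- has a genuine gap. You define the seed $\hat{f}_B^{(0)}$ as ``the classical Poincar\'e extension'' of the local linearizer $f_B|_{U_0}$; but $f_B|_{U_0}$ is a biholomorphism of a small disk, not a M\"obius transformation, so it has no classical Poincar\'e extension. Worse, the exact seed equation $\hat{f}_B^{(0)}\circ\hat{\mu}=\hat{B}^n\circ\hat{f}_B^{(0)}$ requires $\hat{B}^n$ to act near $p_B$ as a genuine M\"obius transformation of $\mathbb{H}^3$ in the \emph{standard} structure with multiplier $\mu$. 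The extension of Theorem~\ref{th.Blasch.dyn} is M\"obius only with respect to the quotient (open-book) structure on $M$, and the repelling point $p_B$ lies in the Julia set, hence in $\overline{G\mathcal{O}(P(B))}$, i.e.\ outside the surface $S$ of Corollary~\ref{cor.Bla.dyn}; there is no standard round half-ball neighborhood of $p_B$ on which $\hat{B}^n$ is an honest loxodromic M\"obius map. Without an exact seed equation the spreading formula $\hat{f}_B(w)=\hat{B}^{nk}(\hat{f}_B^{(0)}(\hat{\mu}^{-k}w))$ is not independent of $k$, so $\hat{f}_B$ is not well defined. In addition, the intertwining map $h$ (which you flag yourself) is not a side issue: the identity $f=\phi\circ f_B\circ h$ with a \emph{global} homeomorphism $h$ of $\C$ needs an argument, since Poincar\'e functions are far from injective, and you also need $\hat{h}$ to exactly conjugate $\hat{\lambda}$ to $\hat{\mu}$, which a Douady--Earle-type extension does not automatically provide.

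The paper avoids all of this by exploiting a symmetry you do not use: for a Blaschke map the repelling cycles lie on $\mathbb{S}^1$, and the Poincar\'e function satisfies $f(\bar{z})=1/\overline{f(z)}$, so $f$ maps the lower half-plane into $\Delta$. Writing $\mathbb{H}^3$ in open-book coordinates $(z,\phi)$ with $z\in\mathbb{H}^2_-$ and page angle $\phi\in(0,\pi)$, the paper sets $\hat{f}(z,\phi)=\tau_\phi(f(z))$, where $\tau_\phi$ is the M\"obius rotation about $\mathbb{S}^1$. Since the dynamical extension $\hat{B}$ of Theorem~\ref{th.Blasch.dyn} is itself defined page-by-page via the same $\tau_\phi$, the functional equation $f(\lambda z)=B^n\circ f(z)$ propagates to every page with no seed, no matching of local models, and no intertwining map, giving $K_f=\hat{f}\circ\hat{\lambda}\circ\hat{f}^{-1}=\hat{B}^n$ directly; the general $R$ is then handled through the Hurwitz space as in Lemma~\ref{lem.branch.Hurw}. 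If you want to rescue your dynamical-spreading argument, you would have to replace your seed by one defined in the open-book coordinates, at which point you essentially recover the paper's construction.
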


\begin{proof}
 Let us first consider a Blaschke map $B$.  Any Poincar\'e function
of $B$ satisfies $$f(\bar{z})=\frac{1}{\overline{f(z)}},$$ hence $f$ maps the 
lower half plane $\mathbb{H}_-^2$ to the unit disk. Now we are in position to 
use a similar argument of the proof of Theorem \ref{th.Blasch.dyn} to define an 
extension $\hat{f}:\mathbb{H}^3\rightarrow \mathbb{B}^3$ as follows, 
first identify $\mathbb{H}^3$ with the ``open book'' coordinates $(z,\phi)$ 
where $z\in \mathbb{H}_-^2$ and $\phi$ in the interval $(0,\pi)$ and put
$$\hat{f}(z,\phi)=\tau_\phi(f(z))$$
where $\tau_\phi$ is the M\"obius rotation of angle $\phi$ in $\mathbb{R}^3$ 
with respect to the unit circle. In this case, from the 
equation satisfied by a Poincar\'e function, we have that $K_f=\hat{B}^n$ for 
some iterate of $\hat{B}$, where $\hat{B}$ is the dynamical extension 
constructed in Theorem \ref{th.Blasch.dyn}. Let $R=\phi\circ
B\circ \phi^{-1}$ then any Poincar\'e function for $R$ belongs to the Hurwitz
space of a suitable Poincar\'e function of $B$. Now we can apply the arguments 
in Lemma \ref{lem.branch.Hurw} to finish the proof. 
\end{proof}

Let us recall that, for every complex affine line $L$ there is a process of
hyperbolization $T:L\rightarrow H(L)$ which associates a hyperbolic manifold
$H(L)$ to $L.$ This hyperbolization process is used in the construction of 
3-hyperbolic Lyubich-Minsky laminations \cite{LM}. By this process there is an 
identification $H(L)\cong \C\times
\mathbb{R}_+$. Given an affine line $L$, let us assume that we have fixed
any such identification.

Let $h_t:L\rightarrow L\times t\subset H(L)$ be the horospherical inclusion, then we have
$$h^{-1}_t(\lambda z)=h_{\lambda t}^{-1}(z).$$

Now let $L_1$ and $L_2$ be complex affine lines. Let $F:L_1\rightarrow L_2$ be
any map, then for any $\lambda>0$ there is a family of 
extensions $\hat{F}_\lambda:H(L_1)\rightarrow H(L_2)$ given in
coordinates by $$\hat{F}_\lambda(x,t)=(F(x),\lambda t).$$
Note that if $F$ is affine then there exists a unique $\lambda_0$ such that
$F_{\lambda_0}$ is the Poincar\'e extension of $F$ in $H(L)$.
Let $q_i:L_i\rightarrow \C$ be maps for 
$i=1,2$. Assume we have a polynomial $P$ and an affine map
$\gamma:L_1\rightarrow L_2$ satisfying 

$$P\circ q_1=q_2\circ \gamma.$$

Then for all $\lambda$, $\omega$ and $\rho$ positive real numbers, we have

$$(q_2)_\omega \circ {\gamma}_\lambda(x,t)=(q_2)_\omega(\gamma(x),\lambda
t)=$$
$$(q_2(\gamma(x)),\omega \lambda t)=(P\circ \pi_1(x),\omega \lambda t)=$$
$$P_\rho(q_1(x),\frac{\omega \lambda}{\rho} t)=P_\rho\circ
(q_1)_{\frac{\omega \lambda}{\rho}}(x,t).$$

Now let $\lambda_0$ be the number such that $\gamma_{\lambda_0}$ is the
Poincar\'e extension of $\gamma$ in $\mathbb{H}^3$ and put $\rho=\lambda_0$
in the formula above, then we have 
$$(q_2)_\omega\circ \gamma_{\lambda_0}=P_{\lambda_0}\circ (q_1)_\omega.$$

Assume that $q_1=q_2=f$ where $f$ is the Poincar\'e function of a fixed
point with multiplier $\lambda_0$, then for every $\omega$ the
map $P_{\lambda_0}$ is a geometric dynamical extension with the same degree.  
In this extension, the orbit of every point in $\mathbb{H}^3$ converges to
infinity. In other words, the Julia set of the extension belongs to 
$\hat{\C}$.
To show that $P_{\lambda_0}$ is geometric let $M_1$ be the complement in
$\mathbb{H}^3$ of all vertical lines based on the $P$-preimages of the
postcritical set, then $M_2$ is the complement in $\mathbb{H}^3$ of all
vertical lines based on the postcritical set and $P_{\lambda_0}:M_1\rightarrow
M_2.$ Then $f_\omega$ endows $M_i$ with incomplete M\"obius structures,
making $P_{\lambda_0}$ geometric. 

Another situation is when $\rho=1$, again let $q_1=q_2=f$ as above. Then 
$P_1$ is a Poincar\'e extension, with the manifolds $M_i$. However, the M\"obius
structures on $M_i$ are different, on $M_1$ is given with $f_\omega$ and on
$M_2$ is given by $f_{\lambda_0 \omega}$. Note that $\rho=1$ gives a
homomorphic extension defined on the semigroup of polynomials.

For the reader familiar with the construction of Lyubich-Minksy \cite{LM}, we
note that natural extension of either $P_{\lambda_0}$ or $P_1$ is equivalent
to the $3$-hyperbolic Lyubich-Minsky lamination.

\section{Product extension}
At least for us, it is very surprising that there is a product 
structure on $\mathbb{H}^3$ which, in a sense, is a  ``conformal natural" 
extension of the
complex product on $\C$. To construct this product, first let us extend the
exponential map $Exp(z)=e^{z}$. We consider the coordinates
$(z,t)$ in $\mathbb{H}^3$. Let $h_\alpha: \mathbb{H}^3\rightarrow
\mathbb{H}^3$ be the translation given by $$(x,y,t)\mapsto (x,y-\alpha,t),$$ 
the map $H_\beta:
\mathbb{H}^3\rightarrow
\mathbb{H}^3$ is the dilation given by $$(x,y,t)\mapsto (\beta x,\beta y,
\beta t),$$ and let
$p:\triangle\rightarrow \mathbb{H}^3$  be the stereographic
projection that maps the unit disk $\triangle$ in the unit semisphere in
$\mathbb{H}^3$. 

Put $$\Phi(x,0,t)=p\circ Exp(x+it)$$ and let $V$ be the vertical semiplane over 
the imaginary line.  
Then $\Phi$ maps $V$ onto the unit semisphere in $\mathbb{H}^3$.
Finally, for
$w=(x,y,t)$  let $$\widehat{Exp}(w)=H_{e^{-2\pi y}}\circ \Phi \circ
h_y^{-1}(w).$$
By construction $\widehat{Exp}$ maps $\mathbb{H}^3$ onto $M$, the complement of 
the $t$-axis in $\mathbb{H}^3$, and is a covering. When $t=0$ the
map $\widehat{Exp}$ coincides with the $Exp$. Also $\widehat{Exp}$ defines a
complete M\"obius structure $\delta$ on $M$. Any M\"obius map that leaves  $M$
invariant is M\"obius in $\delta$. 

Since $Exp$  defines an homomorphism of the additive structure on $\C$ onto the
multiplicative structure of $\C^*$.  Then $\widehat{Exp}$ gives a
multiplication ``$*$'' in $M$, which is the push-forward of
the additive structure on $\mathbb{H}^3.$ Let $a$ and $b$ elements in $M$, and 
let $a_1$ and 
$b_1$ be elements such hat $a=\widehat{Exp}(a_1)$ and $b=\widehat{Exp}(b_1).$ Then 
$$a\star b=\widehat{Exp}(a_1+b_1).$$

\begin{lemma}
 The multiplicative structure in $M$ extends to a multiplicative structure on
$\mathbb{H}^3.$
\end{lemma}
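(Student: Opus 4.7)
The plan is to extend the product by continuity, treating the $t$-axis as the ``modulus-zero'' subset of $\mathbb{H}^3$, analogous to $\{0\}\subset \C$ in the classical extension of multiplication from $\C^*$ to $\C$. My first task would be to establish a multiplicative modulus on $M$: since $\Phi$ lands on the unit semisphere, the dilation $H_{e^{-2\pi y}}$ in the defining formula yields $|\widehat{Exp}(x,y,t)|=e^{-2\pi y}$, where $|\cdot|$ denotes the Euclidean norm from the origin. Consequently $|a\star b|=|a|\cdot|b|$ for all $a,b\in M$ (this is pushed forward from additivity of the $y$-coordinate under addition on $\mathbb{R}^3$), and a sequence $a_n\in M$ approaches the $t$-axis precisely when its lifts satisfy $x_n\to -\infty$, forcing the angular part $\Phi(x_n,0,t_n)$ to tend to the north pole $(0,0,1)$.

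Second, I would define the extension directly. For $a=(0,0,s_a)$ on the $t$-axis and $b\in \mathbb{H}^3$, set
\[
a\star b := (0,0,\,s_a\,|b|),
\]
with the convention that $|b|$ reduces to the $t$-coordinate of $b$ when $b$ itself lies on the axis. To verify that this is forced by continuity, take any sequence $a_n\in M$ with $a_n\to a$: its lifts $(x_n,y_n,t_n)$ satisfy $x_n\to -\infty$ and $y_n\to -(\log s_a)/(2\pi)$. Adding a lift $(x_b,y_b,t_b)$ of $b\in M$ and reapplying $\widehat{Exp}$, the modulus of $a_n\star b$ tends to $s_a\,|b|$ while the angular direction tends to $(0,0,1)$, so the limit is exactly $(0,0,s_a|b|)$.

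Third, verify the algebraic axioms. Independence of the limit from the choice of branch of $\widehat{Exp}^{-1}$ is automatic, since the deck transformations act on the angular coordinate only and so preserve $|b|$. Commutativity is inherited from additivity on $\mathbb{R}^3$. Continuity of $\star$ on all of $\mathbb{H}^3$ is immediate from the explicit formula. Associativity reduces to the observation that whenever any of $a,b,c$ lies on the $t$-axis, both $(a\star b)\star c$ and $a\star(b\star c)$ are points on the $t$-axis with common $t$-coordinate $|a||b||c|$, by multiplicativity of the modulus.

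The main obstacle is isolating the correct notion of modulus on $\mathbb{H}^3$ and proving that $\star$ is multiplicative with respect to it on $M$; once this identity is in hand, the absorbing role of the $t$-axis, the independence of the limit from the approach, and the well-definedness of the extension all become transparent. The only nontrivial remaining check is associativity with two factors on the axis, which is a short calculation in the $t$-coordinate using $|\cdot|$-multiplicativity.
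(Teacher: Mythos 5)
Your proof is correct and takes essentially the same route as the paper: both rest on the multiplicativity of the Euclidean norm, $\|a\star b\|=\|a\|\,\|b\|$, and both define the product of an axis point $(0,0,s)$ with $b$ to be the axis point $(0,0,s\|b\|)$. You merely spell out the continuity and associativity checks that the paper's one-line proof asserts without detail.
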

\begin{proof}
Let $\|.\|$ the standard norm in the euclidean space $\mathbb{R}^3$. Then for
every $x$ and $y$ in $\mathbb{H}^3$. We have
$$\|x*y\|=\|x\|\|y\|.$$ Now, let  $x=(0,0,t)$. For any $y\in \mathbb{H}^3$
define $y*\tau=(0,0,
t\|y\|)$.  The restriction of this multiplication to points in the axis $t$
in $\mathbb{H}^3$ coincides with the multiplication on $\mathbb{R}_+$. This
definition continuously extends the multiplication $*$ to $\mathbb{H}^3$.
\end{proof}

The multiplication in $\mathbb{H}^3$ is commutative and associative and  has the
following properties:
\begin{itemize}
\item The multiplication on the boundary is the usual multiplication in $\C$.
\item Let $\lambda$ be a non-zero complex number then, for any $x\in
\mathbb{H}^3$, we have $\lambda*x=H_\lambda(x)$. Where $H_\lambda(x)$ is the
Poincar\'e extension of the map $z\mapsto \lambda z$.
\item The unique unit element is $(1,0,0)$. For any $x\neq 0,\infty$ in
$\bar{\mathbb{H}}^3$ there exist $y$ such that $x*y=y*x=(1,0,0)$ and $y=H(x)$
where $H$ is the Poincar\'e extension of the map $z\mapsto 1/z.$ 
\end{itemize}

Now we can define an extension of rational maps. Let $R$ be a rational map and
can be represented as a product of M\"obius maps: 
$$R(z)=\prod \gamma_i(z)$$ where $\gamma_i\in PSL(2,C)$. Hence for $x\in
\bar{\mathbb{H}}^3$ we have a extension with respect to the maps $\gamma_i$
$$\hat{R}(x)=\prod_* P(\gamma_i)(x)$$ where $P(\gamma_i)$ is the 
Poincar\'e extension of $\gamma_i$ in $\bar{\mathbb{H}}^3$. Since the
multiplication is commutative then the definition does not depend on the order 
of the factors $\gamma_i$. The following proposition follows from the 
definition of the product extension.

\begin{proposition}The product extension has the following properties:
\begin{enumerate}
 \item If $\sigma_i$ is the geodesic that connects the pole with the zero of
$\gamma_i$. Then $\hat{R}(\sigma_i)$ is the $t$-axis. 
\item The extension $R\mapsto \hat{R}$
is visual.
\item Any rational map $R$ has a finite number of decompositions in M\"obius
factors. Hence there are only finitely many product extensions for each rational
map $R.$
\end{enumerate}
\end{proposition}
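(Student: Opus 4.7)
The plan is to verify each of the three items in turn, relying on the identities for $*$ established in the preceding lemma together with standard properties of the classical Poincar\'e extension.

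For (1), the key observation is that the $t$-axis is $*$-absorbing, playing the role that $\{0,\infty\}$ plays under ordinary multiplication on $\bar{\C}$: the formula $y*(0,0,t)=(0,0,t\|y\|)$ just established shows that $*$-multiplying any $y$ by a point on the $t$-axis lands on the $t$-axis. Since $P(\gamma_i)$ is a hyperbolic isometry that sends the pole and the zero of $\gamma_i$ to $\infty$ and $0$ respectively, and since isometries carry geodesics to geodesics, $P(\gamma_i)(\sigma_i)$ is the unique geodesic joining $0$ and $\infty$, namely the $t$-axis. Thus for each $x\in\sigma_i$ the factor $P(\gamma_i)(x)$ lies on the $t$-axis, and by commutativity of $*$ the full product $\hat{R}(x)$ lies there as well. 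For the reverse inclusion, multiplicativity $\|x*y\|=\|x\|\|y\|$ gives $\|\hat{R}(x)\|=\prod_j \|P(\gamma_j)(x)\|$; as $x$ traverses $\sigma_i$ from the pole to the zero, $\|P(\gamma_i)(x)\|$ varies continuously from $\infty$ to $0$ while the remaining $\|P(\gamma_j)(x)\|$ stay finite and nonzero, so $\|\hat{R}(x)\|$ covers $(0,\infty)$ by the intermediate value theorem, and the entire $t$-axis is attained.

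For (2), I would read \emph{visual} as meaning that the extension is assembled entirely from operations defined through the radial/dilation structure of the ball model. The classical Poincar\'e extensions $P(\gamma_j)$ are visual by construction, and the product $*$ itself was built from $\widehat{Exp}$, the stereographic projection, and the euclidean dilations $H_\lambda$, all of which are visual. Composing and $*$-multiplying visual operations preserves visuality, so the verification reduces to inspecting each elementary ingredient, which is immediate from the definitions in this section.

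For (3), the argument is combinatorial: a rational map $R$ of degree $d$ has exactly $d$ zeros and $d$ poles counted with multiplicity, and any factorization of $R$ as a product of M\"obius factors corresponds to a pairing of zeros with poles (at most $d!$ such pairings) together with a distribution of the leading constant among the factors, which can always be absorbed into a single factor. There are therefore finitely many multisets of M\"obius factors representing $R$; since $*$ is commutative and associative, the product extension depends only on this multiset, so $R$ admits only finitely many product extensions. The main obstacle is (2), because the notion of visual extension is left informal in the excerpt; a rigorous argument demands first fixing a precise definition, after which the compatibility of $\widehat{Exp}$, $*$, and the classical Poincar\'e extension with that definition becomes the routine content of the proof.
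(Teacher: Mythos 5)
The paper supplies no proof of this proposition at all --- it is introduced with the sentence ``The following proposition follows from the definition of the product extension'' --- so there is nothing of the authors' to compare your argument against; what you have written is a sensible filling-in of that omission, and items (1) and (3) are essentially correct. For (1): yes, $P(\gamma_i)$ carries $\sigma_i$ isometrically onto the geodesic joining $0$ to $\infty$, the $t$-axis is $*$-absorbing by the formula $y*(0,0,t)=(0,0,t\|y\|)$, and surjectivity follows from $\|x*y\|=\|x\|\|y\|$ together with the intermediate value theorem. The one caveat is that your endpoint analysis (``the remaining $\|P(\gamma_j)(x)\|$ stay finite and nonzero'') silently assumes the decomposition has no cancellation, i.e.\ that no zero of one factor is a pole of another; otherwise the limit of the product at an endpoint of $\sigma_i$ is of the form $\infty\cdot 0$. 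Such degenerate decompositions must be excluded anyway for (3) to be true, so this is a normalization worth stating once and using in both items.

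That normalization is also the soft spot in your (3). As literally stated the count of decompositions is infinite: the scalar can be redistributed arbitrarily ($z^2=(cz)(z/c)$ for every $c\neq 0$), and cancelling pairs $\gamma,\gamma^{-1}$ can be appended. What is finite is the set of pairings of the $d$ zeros with the $d$ poles; one must then verify that the product extension is insensitive to how the constants are distributed. This does hold, and it is the one computation that actually needs writing down: $P(c\gamma)=H_c\circ P(\gamma)$ since the Poincar\'e extension is a homomorphism, $H_c(y)=c*y$ by the listed properties of $*$, and commutativity and associativity then give $\prod_* P(c_i\gamma_i)(x)=\bigl(\prod c_i\bigr)*\prod_* P(\gamma_i)(x)$, so only the total constant matters. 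You gesture at ``absorbing the constant into a single factor'' but do not carry out this step. Finally, for (2) you are right that \emph{visual} is never defined in the paper; your reading (built from radial and dilation data) is a guess, and the meaning in the cited source [Mc2] --- that $\hat R(x)$ is determined by the pushed-forward visual measure $R_*\nu_x$ --- would require a different verification. Since the paper asserts (2) with neither definition nor proof, this gap is the paper's as much as yours, but it should be flagged as genuinely unproved rather than routine.
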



Another extension from $\C$ to $\mathbb{H}^3$ is  induced with the product, is
given by a monomorphism $\Phi$ from the ring of formal series over the usual
multiplication on the complex plane to the ring of formal series with the $*$
multiplication. The map $\Phi$ is continuous on the subring of polynomials.
However, is not clear whether it is still continuous on the subring of
absolutely convergent series. Note that this extension is not conformally
natural, is not even visual. The map $\Phi$ is not a homomorphism
with respect to composition. The biggest semigroup $S$, where $\Phi$
defines a homomorphism with respect to composition, is the generated by
$\lambda z^n$ for any complex $\lambda$ and $n$ a natural number. Moreover,
$\Phi$ on $S$ is conformally natural, geometric and the same degree. In general
is not clear when product and ring extensions are geometric, numerical
calculations of the ring extension of $z^2+c$, with $c$ real, suggests that
this extension is geometric.

\subsection{Some examples of Poincar\'e extensions of quadratic polynomials}

Here we compute some Poincar\'e extensions. These computations are based in the 
following formula for the exponential map defined on the previous section.
\begin{displaymath}
\widehat{Exp}(x,y,t)=(\frac{2e^{t}cos(y)}{1+e^{2t}}e^{x},\frac{2e^{t}sin(y)}{
1+e^ { 2t } } e^ { x},
\frac{e^{2t}-1}{1+e^{2t}}e^{x}).
\end{displaymath}

We have the following facts:
\begin{itemize}
 \item The map $\widehat{Exp}$ is a Poincar\'e extension of 
the map $e^{ z}$. 
\item Let $T$ be group generated by the translation $z\mapsto z+2\pi 
i$, then  the action of $T$ in $\mathbb{C}$ extends an action in $\mathbb{H}^3$ 
generated by the map $(z,t)\mapsto (z+2\pi i,t)$. 
\item The orbit space 
$\mathbb{H}^3/T$ is homeomorphic to $B_L:=\mathbb{H}^3 \setminus L$ 
where $L$ is the $t$-axis.
\item  There exist a complete hyperbolic M\"obius 
structure on $B_L$ so that $\widehat{Ext}:\mathbb{H}^3 \rightarrow B_L$
 defines a M\"obius universal covering map.
\item   The extension from $Exp$ to 
$\widehat{Exp}$ is conformally natural.

\end{itemize}

Let $H_2$ be the Poincar\'e extension of the M\"obius map $z\mapsto 2z$, hence 
the map $\hat{Q}= \widehat{Exp} \circ H_2 \circ \widehat{Exp}^{-1}:B_L 
\rightarrow B_L$
is a Poincar\'e extension of the map $Q(z)=z^{2}$.

For a circle $S$ in $\partial {\mathbb{H}}^{3}$, let us define the 
\textit{dome} over $S$ as the 2-sphere with equator $S$ intersected with 
${\mathbb{H}}^{3}$ and will be denoted by $Dome(S)$.

Using the equations above, we obtain the equation  

$$\hat{Q}({\lambda}(X,Y,T))={\lambda}^{2}(\frac{X^{2}-Y^{2}}{1+T^{2}}, 
\frac{2XY}{1+T^{2}},\frac{2T}{1+T^{2}})$$

with $\lambda \in \mathbb{R}$ and $(X,Y,T)\in Dome(S^{1})$.

If $x={\lambda}X$, $y={\lambda}Y$ and $t={\lambda}T$, then
$||p||^{2}=x^{2}+y^{2}+t^{2}={\lambda}^{2}$. We have that

\begin{displaymath}
\hat{Q}(x,y,t)=(||p||^{2}\frac{x^{2}-y^{2}}{||p||^{2}+t^{2}},||p||^{2}\frac{2xy}
{ ||p||^{2}+t^{2}},||p||^{2}\frac{2t||p||}{||p||^{2}+t^{2}})
\end{displaymath}

In this case, by the formula above, we have that in fact, $\hat{Q}$ extends to 
the whole  $\mathbb{H}^{3}$  and $\hat{Q}(0,0,t)=(0,0,t^2)$. Moreover, 
for every $w$ in $\bar{\mathbb{H}}^3$ we have $\hat{Q}(w)=w*w$ where $*$ is 
the product defined above. The map $\hat{Q}$ commutes with the reflection with 
respect to the dome.

We have the following invariant foliations for the action of $\hat{Q}$:

\begin{enumerate}

\item  \textbf{Semi-spheres centered at the origin}. Observe that parallel 
planes of the form $(x_0, y, t)$: under  $H_2$ are mapped to themselves $(2x_0, 
2y, 2t)$. The map $\widehat{Exp}$ send this foliation to a foliation of domes 
over circles centered at the origin. Hence this domes is a foliation invariant 
under $\hat{Q}$.

\item {\bf Cones centered at the origin}: Horizontal planes (horocyclic 
foliation of ${\B}^{3}$) of the form $(x, y_0, z)$ are invariant under $H_2$, 
hence their image under $\widehat{Exp}$ also. These are cones centered at $0$.

\item {\bf Onion like foliation}: Planes of the form $(x,y,kx)$, are invariant 
under $H_2$. Its image is the onion- like foliation surrounding the vertical 
axis $(0,0,t)$. This is a book decomposition, here the bind of the book is the 
unit circle in the 
boundary plane. 
\end{enumerate}

Let $T_c(x,y,t)=(x+Re(c),y+Im(c), t)$ be the Poincar\'e extension of the map 
$z\rightarrow z+c$ and $L_c$ be the vertical line over $c$ in $\mathbb{H}^3$ 
and  let $B_{L_c}=\mathbb{H}^3\setminus L_c$. If $Q_c(z)=z^2+c$, then we
have an extension $\hat{Q}$ of $Q_c$  depicted in the following commutative 
diagram 

 \begin{displaymath}
     \xymatrix{\mathbb{H}^3
         \ar[d]_{\widehat{Exp}} \ar[r]^{ H_2} & 
 \ar[d]^{T_c\circ \widehat{Exp}}  \mathbb{H}^3 \\
  B_L \ar[r]^{\hat{Q}_c}  & B_{L_c}.}
 \end{displaymath}

The diagram implies that $\hat{Q}_{c}=T_{c} \circ \hat{Q}$. In this case, the 
line $L$ is also the critical line but has complicated dynamics. Let us define 
$K(\hat{Q}_{c})$:= {\bf spatial filled Julia set} of 
$\hat{Q}_{c}$, the set of $(x,y,t)$ such that $\hat{Q}_{c}^{n}(x,y,t)$ does not 
tends to $\infty$ as $n \rightarrow \infty$. 

Using similar arguments as in the one dimensional case, one can show that 
$K(\hat{Q}_c)$ is always bounded in $\mathbb{H}^3$. Also, for parameters $c$ 
with $|c|$ large enough, the critical line converge to infinity. If $V_0$ denotes the 
semiplane $\{(x,y,t)\in \mathbb{H}^3: y=0\}.$ The section 
$K(\hat{Q}_{c}) \cap V_0$ is a bi-dimensional set that we have 
illustrated in Figure \ref{figura.1} for different values of $c$.  
\begin{figure}[htbp]\label{figura.1}
\begin{center}
\includegraphics[scale=0.5]{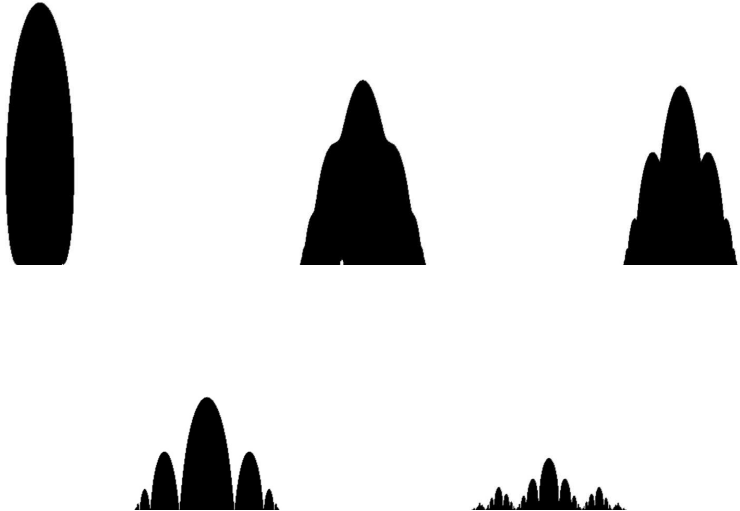}
\caption{The sets $K(\hat{Q}_c) \cap V_0$ for $c=.25,-0.75,-.77,-1, -1.28$ from left to right and top to bottom.}.
\end{center}
\end{figure}

\section{Remarks and conclusions}

As it was mentioned in the introduction, there are some constructions in the 
literature of extensions of rational maps into endomorphisms of $\mathbb{H}^3.$
Most of these extensions are based on the barycentric construction suggested by 
Choquet\'s theorem. Let us briefly describe the barycentric extension. Let 
$\bar{\mathbb{B}}^3$ be the closed unit ball in $\mathbb{R}^3$. Let $\mc{M}$ 
be the space of probability measures on $\partial \mathbb{B}^3$. Then, for 
every $\mu$ in $\mc{M}$ the barycenter
of $\mu$ is the unique point $x$ such that for every functional $L$ on
$\mathbb{R}^3$, the
following equation holds
$$L(x)=\int_{\partial \mathbb{B}^3} L(y)d\mu(y).$$

We define $Bar(\mu)=x$, by Choquet's theorem (See \cite{phelps}, page 48) 
the map $Bar$ sends $\mc{M} $ onto $\bar{\mathbb{B}}^3$. The semigroup
$Rat(\C)$ acts in $\mc{M}$ by push-forward, for every $R$ in $Rat(\C)$ we denote
by $R\mu=R_*(\mu)$ the push-forward of $\mu$ by $R$. For every point $x\in
\mathbb{B}^3$, let $\nu_x$ be the visual measure based on $x$ we define the
barycentric extension of $R$ by $$\hat{R}(x)=Bar(R\nu_x).$$ Then the
barycentric
extension is visual as is proved in \cite{Mc2}. If instead we use the conformal 
barycenter we
obtain a conformally natural extension as is proved in \cite{Carsten}. It is
very difficult to get any geometric information of these extensions. For
instance, it is not clear that these extensions defines a branched covering of
the same degree from $\bar{\mathbb{H}}^3$ to $\bar{\mathbb{H}}^3$.

The following proposition was already mentioned in \cite{LiLiuSufamily}. We
include the proof for completeness.

\begin{proposition}
Let $R$ be a rational map, then all conformally natural extensions of $R$ are
homotopic, with a homotopy that consist of conformally natural extensions of 
$R$.
\end{proposition}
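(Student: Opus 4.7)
The plan is to build the homotopy by hyperbolic geodesic interpolation. Given two conformally natural extensions $E_0,E_1$ of $R$, define, for each $x\in\mathbb{H}^3$ and $t\in[0,1]$, the point $E_t(x)$ to be the point at hyperbolic parameter $t\cdot d_{\mathbb{H}^3}(E_0(x),E_1(x))$ along the unique hyperbolic geodesic from $E_0(x)$ to $E_1(x)$; if $E_0(x)=E_1(x)$, set $E_t(x)$ to that common value. Since every geometric object used here (the geodesic, its arc-length parametrization) is intrinsically hyperbolic, this construction is canonical and does not require a coordinate choice.

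Next I would extend $E_t$ continuously to $\bar{\mathbb{H}}^3$ and check that it still extends $R$. On $\partial\mathbb{H}^3\cong\bar{\mathbb{C}}$ we have $E_0|_\partial=E_1|_\partial=R$, so as $x$ approaches a boundary point, $E_0(x)$ and $E_1(x)$ approach the same limit $R(x)$; the intervening geodesic segments collapse, and continuity at the boundary gives a continuous endomorphism $E_t\in End(\bar{\mathbb{H}}^3)$ which restricts to $R$ on $\bar{\mathbb{C}}$. Thus $\{E_t\}_{t\in[0,1]}$ is a continuous family of extensions of $R$ connecting $E_0$ to $E_1$.

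The heart of the argument is verifying that each $E_t$ is conformally natural, i.e. that the correspondence $g\circ R\circ h\mapsto \hat{g}\circ E_t\circ \hat{h}$ is well-defined on the M\"obius bi-orbit of $R$. By the discussion preceding the proposition, this reduces to checking that, whenever M\"obius transformations $h_1,h_2$ satisfy the semiconjugacy $R\circ h_2=h_1\circ R$, one has $\hat{h}_1\circ E_t=E_t\circ \hat{h}_2$. Here I exploit the decisive fact that classical Poincar\'e extensions of M\"obius maps are isometries of $\mathbb{H}^3$: since $E_0$ and $E_1$ are conformally natural, $\hat{h}_1$ sends the endpoints $E_0(x),E_1(x)$ to $E_0(\hat{h}_2(x)),E_1(\hat{h}_2(x))$, and as an isometry it maps the geodesic arc between them to the geodesic arc between their images preserving arc length. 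Hence $\hat{h}_1(E_t(x))=E_t(\hat{h}_2(x))$ for every $t$, and the compatibility propagates through the homotopy.

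The main obstacle I foresee is not in the interpolation itself but in handling a subtle case: if for some $x$ the two points $E_0(x),E_1(x)$ both lie on $\partial\mathbb{H}^3$, there may be ambiguity in the ``geodesic'' between them (and no uniform bound on the hyperbolic distance). However, away from the boundary the construction is smooth, and a careful limiting argument at $\partial\mathbb{H}^3$ — using that both extensions agree with $R$ there — shows the interpolation extends continuously. Once this continuity is in place the equivariance argument via hyperbolic isometry is routine, so the proposition follows.
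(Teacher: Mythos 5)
Your proposal is correct and follows essentially the same route as the paper: interpolate along the hyperbolic geodesic from $E_0(x)$ to $E_1(x)$ at parameter $t\,d(E_0(x),E_1(x))$, note that the segments collapse at the boundary so each $E_t$ extends $R$, and use that Poincar\'e extensions of M\"obius maps are isometries to propagate conformal naturality through the family. You actually spell out the equivariance verification that the paper only asserts, so nothing is missing.
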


\begin{proof}
 Let $\h{Q}$ and $\h{S}$ be extensions of a rational map $R$, and let $x\in
\mathbb{H}^3$. For $\lambda$ in $[0,1]$, let $E_{\lambda}(x)$ be the point 
along the geodesic from $\h{Q}(x)$ to $\h{S}(x)$, which is at distance $\lambda
d(\h{Q}(x),\h{S}(x))$. Since for every $x\in \partial \mathbb{H}^3$,
$\h{Q}(x)=\h{S}(x)=R(x)$. The map $E_\lambda(x)$ extends to $\partial
\mathbb{H}^3$ as an extension of $R$. If the maps $\h{Q}$ and $\h{S}$ 
are either visual, conformally natural or Poincar\'e, the map $E_\lambda(x)$ 
also holds the same property.
\end{proof}

It follows that if for a given rational map $R$ there are two visual (or
conformally natural) extensions, then there are uncountably many
visual (or conformally natural) extensions. This situation contrasts with the 
product extensions which are only finitely many.

The extension discussed in \cite{martinextension} is uniformly quasiregular
dynamical, has the same degree as the starting map $R$. Moreover, it can be
shown that is geometric. However, for most rational maps these extensions
do not exist \cite{martinextension}.

Another aspect of our geometric construction is about Maskit surgery on the
respective M\"obius manifolds.
A rational map $R:S_1\rightarrow S_2$ is modeled with two groups 
$\Gamma_1<\Gamma_2$.
Let us assume that $\Gamma_2=\langle \gamma_1,...,\gamma_n\rangle$. For $1<k<n$
$G=\langle
\gamma_1,...,\gamma_k\rangle$ and $H=\langle
\gamma_{k+1},...,\gamma_n\rangle$, and consider the intersections 
$G_i=\Gamma_i\cap
G$ and $H_i=\Gamma_i\cap H.$ Then $G_1$ and $H_1$ are subgroups of finite 
index in $G_2$ and $H_2$ respectively. This construction defines two rational 
maps $R_G$ and $R_H$ associated to groups $G_i$ and $H_i$ respectively. Is 
possible to define analogously amalgamated products and HNN-extension.

Now that we have associated a Schottky group to each Blaschke maps, we can use
Maskit combination theorems in order to construct rational maps out of a pair of
Blaschke maps. A theorem of B. Maskit shows that every Schottky group is 
product of
cyclic groups. Let us consider the following example, let $B$ and $B'$ be
two Blaschke products of degree $2$, let $O_1$ and $O_2$ be uniformizations
associated to $B$, and $O_1'$ and $O_2'$ be the uniformization of Schottky type
associated to $B'$ such that the automorphism group of each $O_i$ is Schottky.
Let $\Gamma_1$ and $\Gamma_2$, $\Gamma'_1$ and $\Gamma'_2$ be the associated
Schottky groups, then we construct the product $\tilde{\Gamma}_1=\Gamma_1 *
\Gamma'_1$ and $\tilde{\Gamma}_2=\Gamma_2*\Gamma'_2$. The diagonal action 
of the inclusions gives an inclusion $\tilde{\alpha}:\tilde{\Gamma}_1\rightarrow
\tilde{\Gamma}_2$. Defined on the connected sum of $O_1$ with $O_2$. After 
taking
quotients, we obtain a rational map $R$ which depends only on the combinatorial
data of $B$ and $B'.$ We believe that classical combination constructions in
holomorphic dynamics, such as mating, tuning and surgery are special cases of
the combinations just described.

\bibliographystyle{amsplain} 
\bibliography{workbib}

\end{document}